\newcommand{\arxiv}[1]{{\tt \href{http://arxiv.org/abs/#1}{arXiv:#1}}}
\newcommand{\floor}[1]{\left\lfloor {#1} \right\rfloor}
\newcommand{\old}[1]{}
\newcommand{\moniker}[1]{{\em (#1)}}
\newtheorem{theorem}{Theorem}
\newtheorem{prop}[theorem]{Proposition}
\newtheorem{conjecture}[theorem]{Conjecture}
\newtheorem{question}[theorem]{Question}
\theoremstyle{remark}
\numberwithin{counter}{section}
\theoremstyle{definition}
\def\00{\mathbf{0}}
\def\d{\,\mathrm{d}}
\def\weakstar{\stackrel{*}{\rightarrow}}
\def\N{\mathbb{N}}
\def\Z{\mathbb{Z}}
\def\R{\mathbb{R}}
\def\EE{\mathbb{E}}
\def\Var{\mathrm{Var}}
\def\eps{\epsilon}
\def\zero{\mathbf{0}}
\def\one{\mathbf{1}}
\def\Stab{\mathcal{S}}
\def\Wake{\mathcal{W}}
\newcommand{\ARW}{\mathrm{ARW}}
\newcommand{\zagg}{\zeta_\mathrm{a}} 
\newcommand{\zth}{\zeta_{\mathrm{c}}} 
\newcommand{\zsta}{\zeta_{\mathrm{s}}}
\newcommand{\sleep}{{\tt{s}}}
\newcommand{\awake}{{\tt{a}}}
\begin{document}

\title[]{Universality Conjectures for Activated Random Walk}

\author[L. Levine]{Lionel Levine}
\address{Department of Mathematics, Cornell University, Ithaca, NY 14853.}
\email{levine@math.cornell.edu}

\author[V. Silvestri]{Vittoria Silvestri}
\address{Department of Mathematics, University of Rome La Sapienza, 00185 Rome, Italy..}
\email{silvestri@mat.uniroma1.it}

\date{June 14, 2023}
\keywords{abelian network, activated random walk, hyperuniformity, incompressibility, microscopic limit, quadrature inequality, scaling limit, spatial mixing, stationary distribution, temporal mixing}
\subjclass[2020]{
60K35, 
82C22, 
82C26, 
82B26, 
82B27, 
}

\begin{abstract}
Activated Random Walk is a particle system displaying Self-Organized Criticality, in that the dynamics spontaneously drive the system to a critical state. How universal is this critical state?  We state many interlocking conjectures aimed at different aspects of this question: scaling limits, microscopic limits, temporal and spatial mixing, incompressibility, and hyperuniformity.
\end{abstract}

\maketitle

\section{Introduction}

Many complex systems in nature are driven by a steady source of energy which builds up slowly and is released in intermittent bursts.  An example is the steady accumulation of pressure between continental plates, which is released in sudden bursts in the form of earthquakes. Wildfires, landslides, avalanches, and financial crises all have this character. 

In a famous 1987 paper \cite{BTW87}, Bak, Tang and Wiesenfeld proposed both a general mechanism for how such systems arise, and a prototypical example.  Their term for the mechanism was \textbf{self-organized criticality (SOC)}, and their example was a pile of sand resting on the surface of a table.  Given the current slope, $\zeta$, of the pile, we can sprinkle more sand on top and measure two things:
\begin{itemize}
\item How much sand falls off the table?
\item Does $\zeta$ tend to increase or decrease?
\end{itemize}
In experiments, one finds that the system drives itself to a \textbf{critical slope} $\zeta_c$: If the pile is too flat ($\zeta<\zeta_c$), then very little sand falls off the table and $\zeta$ increases as sand is added. If the pile is too steep ($\zeta>\zeta_c$), then 
the sprinkling causes a lot of sand to fall off the table, so that $\zeta$ \emph{decreases} as sand is added. No matter the initial sand profile, the effect of adding more sand is therefore to drive the pile toward its critical slope $\zeta_c$. 

This new idea of self-organization to criticality led initially to a lot of excitement (exemplified by Per Bak's ambitiously titled book \emph{How Nature Works}), 
but its success in making specific testable predictions has been modest so far. We do not know of a ``universal model of SOC'' in the way that, for instance, Brownian motion is a ``universal model of diffusion.'' 

\subsection{In search of a universal model of SOC}

The most intensively studied model of SOC is called the Abelian Sandpile. In this model, the pile of sand is a collection of indistinguishable particles on the vertices of a fixed graph, for example the $d$-dimensional cubic lattice $\Z^d$. When a vertex has at least as many particles as the number of neighbors in the graph ($2d$, in the case of $\Z^d$), it \emph{topples} by sending one particle to each neighboring vertex. As a result, some of those neighboring vertices may now have enough particles to topple, enabling some of their neighbors in turn to topple, and so on: an avalanche. Dhar \cite{Dha90} discovered a beautiful algebraic structure underlying this model. 

\emph{Abelian networks} \cite{BL1} form a larger class of SOC models. Among these, the Stochastic Sandpile \cite{Dha99c}, the Oslo model \cite{Fre93}, and 
Activated Random Walk \cite{HS04,RS11} (but not the original Abelian Sandpile!) seem to have some ``universality'' in the sense that when the system size is large, its behavior does not depend much on details like the initial condition, the boundary conditions, or the underlying graph. 
However, the meaning of ``universality'' is rarely spelled out.
The purpose of this survey is to state precisely several senses in which one of these models, Activated Random Walk, seems to be ``universal''.

Activated Random Walk (ARW) is a particle system with two species, active particles ($\awake$) and sleeping particles ($\sleep$) that become active if an active particle encounters them ($\awake+\sleep \rightarrow 2\awake$).  Active particles perform random walk at rate 1. When an active particle is alone, it falls asleep ($\awake \rightarrow \sleep$) at rate $\lambda$. A sleeping particle stays asleep until an active particle steps to its location.  The parameter $\lambda>0$ is called the \emph{sleep rate}. We denote this dynamics by ARW($\Z^d , \lambda )$.  

To draw out the analogy between ARW and sandpiles: The sleeping particles play the role of sand grains, the movement of the active particles plays the role of toppling, and the awakening of sleeping particles by active particles can trigger an avalanche in which many particles wake up. The density of particles in the system plays the role of the slope of the sandpile. Just as a pile of high slope can easily be destabilized by adding a single sand grain, an ARW configuration with a high density of sleeping particles can easily be destabilized by adding a single active particle.

\subsection{Plan of the paper}

We discuss ARW dynamics in six settings, differing in the initial condition, underlying graph, or boundary conditions. 
Our conjectures focus on the approach to criticality, and on shared properties of the corresponding critical states.  
Sections \ref{s:pointsource} and \ref{s:multiple} consider finite particle configurations in infinite volume. Our conjectures touch on the existence of scaling limits (Conjectures \ref{c.agg}, \ref{c.uniform}, \ref{c.quadrature}),  
	 microscopic limits (Conjectures \ref{c.micro}, \ref{c.bulkaggdensity}, \ref{c.microsame}), 
	 and extra symmetry acquired in the limit. 
	
In Section \ref{s:stationaryergodic} we discuss infinite (stationary ergodic) particle configurations on $\Z^d$. We conjecture existence of a microscopic limit as the threshold density is approached from below (Conjecture \ref{c.subcritical}). 

In Sections \ref{s:wiredchain}, \ref{s:freechain}, \ref{s:wakechain} we consider three different Markov chains on ARW stable configurations on a finite graph. 
The main themes here are
	temporal mixing (the system quickly forgets its initial condition: Conjectures \ref{c.cutoff},\ref{c.dense}), 
	spatial mixing (the boundary condition does not affect observables in the bulk: Question \ref{q.measures}, Conjecture~\ref{c.bulkdensity}), 
	and a slow-to-fast phase transition (Question~\ref{q.slowfast}, Conjecture~\ref{c.slowfastwake}). 

Sections \ref{s:hyperuniformity} and \ref{s:correlations} discuss statistical properties of these ARW systems: hyperuniformity (Conjectures \ref{c.hyperwired}, \ref{c.hyperfree}, Question~\ref{q.hyperwake}) and site correlations (Tables \ref{table.free},\ref{table.wired},\ref{table.pointsource}). 

Several conjectures on shared properties of critical or stationary states in the different settings are offered throughout the article (see Conjectures \ref{c.densities},\ref{c:hockey},\ref{c.freethreshold}, Question \ref{q.measures} and Proposition \ref{p:hockey}). 

 We conclude in Section \ref{s:contrasts} by contrasting the conjectured behavior of ARW with what is known about the Abelian Sandpile model.

\section{Point Source}
\label{s:pointsource}

\begin{figure}[h!]
\centering
\begin{tabular}{ccc}
    \includegraphics[scale=0.58]{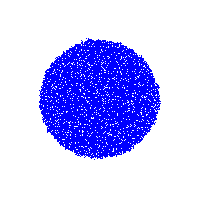} &
    \includegraphics[scale=0.58]{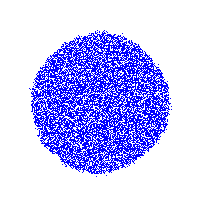} &
    \includegraphics[scale=0.58]{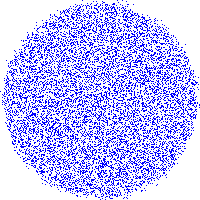} \\
    $\lambda=4$ & $\lambda=1$ & $\lambda=0.25$ \\
    $\zagg \approx 0.91$ & $\zagg \approx 0.68$ & $\zagg \approx 0.34$
\end{tabular}
\caption{\textsl{ARW aggregates formed by stabilizing a point source of $10000$ active particles at the origin in the square lattice $\Z^2$, at three different sleep rates $\lambda$. 
Each pixel represents a site of $\Z^2$: sites with a sleeping particle are colored blue, and empty sites are colored white. 
Particles spread farther when the sleep rate is lower, so the aggregate density $\zagg$ is an increasing function of $\lambda$.}}
\label{f:aggregates}
\end{figure}

\subsection{Spherical limit shape}

Consider $\ARW(\Z^d, \lambda)$ with initial configuration $n \delta_0$, consisting of $n$ active particles at the origin and all other sites empty. After a dynamical phase in which each particle performs random walk, and may fall asleep and be awakened many times, activity will die out when all particles fall asleep at distinct sites. 
We refer to the final configuration of $n$ sleeping particles as the \emph{ARW aggregate} (Figure~\ref{f:aggregates}).

\begin{conjecture} \moniker{Aggregate density $\zagg$} \label{c.agg}
Let $A_n$ denote the random set of sites visited by at least one walker during the dynamical phase of ARW started from $n$ particles at the origin in $\Z^d$.

There exists a positive constant $\zagg = \zagg(\Z^d,\lambda)$ such that for any $\eps>0$, with probability tending to $1$ as $n \to \infty$, the random set $A_n$ contains all sites of $\Z^d$ that belong to the origin-centered Euclidean ball of volume $(1-\eps)n/\zagg$; and $A_n$ is contained in the origin-centered Euclidean ball of volume $(1+\eps)n/\zagg$.
\end{conjecture}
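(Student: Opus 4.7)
The plan is to adapt to ARW the strategy that established the Euclidean ball as the limit shape for Internal DLA (Lawler--Bramson--Griffeath, Jerison--Levine--Sheffield, Asselah--Gaudilli\`ere). The abelian property is crucial: using the stack (Diaconis--Fulton) representation, the site-wise odometer $u_n(x)$, which counts the total number of instructions executed at $x$ when stabilizing $n\delta_0$, is a deterministic functional of the background instruction stacks, independent of the toppling order, and so is $A_n = \Set{x \in \Z^d}{u_n(x) > 0}$. From this one reads off monotonicity $A_n \subseteq A_{n+1}$ and the trivial lower bound $|A_n| \geq n$ (the $n$ sleeping particles occupy distinct sites in any stable configuration). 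Existence of the constant $\zagg$ would be handled in two stages: a concentration argument showing $|A_n|$ is close to its mean (e.g.\ via martingale differences on an exploration filtration of the stacks, exploiting the locality of the odometer's response to single-instruction changes), together with identification of the deterministic limit of $\EE|A_n|/n$ via the continuum obstacle problem discussed below.

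For the outer bound, the task is to show that with high probability no particle reaches radius $r = (1+\eps)(n/\omega_d \zagg)^{1/d}$. The standard approach via the abelian property is to stabilize in stages, first within $B(0, r)$ with absorbing boundary, then release absorbed particles one at a time into the (nearly empty) exterior. Each released walker essentially performs a simple random walk subjected to killing at rate $\lambda$ from falling asleep, so optional stopping applied to the discrete harmonic extension outside $B(0, r)$ yields an exponential tail on the escape distance. A union bound over the $O(r^{d-1})$ boundary particles completes the argument; this step parallels the IDLA escape estimate with the sleep mechanism providing an additional killing term.

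The inner bound $A_n \supseteq B(0, (1-\eps)(n/\omega_d \zagg)^{1/d})$ is, I believe, the principal obstacle. The natural formulation is a least-action / discrete obstacle problem: $u_n$ is the minimal nonnegative function on $\Z^d$ whose discrete Laplacian $\Delta u_n$ plus $n\delta_0$ dominates the final sleeping density $\rho_n$. Rescaling on the scale $n^{1/d}$ should produce in the limit a classical obstacle problem whose coincidence set is an origin-centered Euclidean ball and whose filling density equals $\zagg$. The difficulty is that $\rho_n$ is itself random and not monotone in the stacks in any clean way, so ruling out interior ``holes'' of atypical density appears to require a form of spatial mixing for the ARW stationary law at density $\zagg$ --- itself an open problem and the subject of further conjectures elsewhere in this paper. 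A reasonable intermediate target would be the spherical limit shape in Hausdorff distance with possible interior defects of vanishing volume fraction, before bootstrapping to uniform interior density using the a posteriori regularity of the continuum obstacle problem solution.
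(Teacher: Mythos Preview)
The statement is a \emph{conjecture}, and the paper offers no proof. The only commentary the paper gives is: (i) a weak form is proved in dimension $1$ in \cite{LS2}; (ii) as $\lambda \uparrow \infty$ the model degenerates to Internal DLA, whose spherical limit shape is known; and (iii) ``The main barrier to applying Internal DLA methods is proving that sleeping particles are spread uniformly, which is the topic of our next conjecture.''

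Your proposal is explicitly a strategy sketch, not a proof, and you say so. As a sketch it is well-aligned with the paper's diagnosis: you propose to port the IDLA machinery (abelian/stack representation, odometer, obstacle problem), you separate outer and inner bounds, and you correctly flag the inner bound as the hard part because it requires control on the random sleeping density $\rho_n$---precisely the ``sleeping particles are spread uniformly'' obstruction the paper names and then elevates to its own Conjecture~\ref{c.uniform}. So there is no proof to compare against, but your identification of the bottleneck matches the authors'.

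Two remarks on the sketch itself. First, the outer-bound heuristic is too optimistic as written: a released particle in the exterior does \emph{not} behave like a random walk killed at rate $\lambda$, because it can wake sleepers already present and thereby propagate activity without itself being at risk of permanent sleep; the effective killing rate depends on the ambient density of sleepers, which is exactly the unknown quantity. Second, the concentration-of-$|A_n|$ step via bounded-difference martingales on the instruction stacks is delicate: a single instruction flip can, in principle, alter the odometer at many sites, so a Lipschitz bound is not immediate without further structural input. Neither of these is fatal to the overall plan, but both are places where the IDLA analogy breaks and genuinely new ARW input (uniformity/incompressibility of sleepers, as in Conjectures~\ref{c.uniform} and~\ref{c.compress}) would be needed.
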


A weak form of Conjecture~\ref{c.agg} in dimension $1$ is proved in \cite{LS2}. 
As the sleep rate $\lambda \uparrow \infty$, Activated Random Walk degenerates to Internal DLA, whose limit shape is proved to be a Euclidean ball \cite{LBG92}. 
The main barrier to applying Internal DLA methods is proving that sleeping particles are spread uniformly, which is the topic of our next conjecture.	

\subsection{Macroscopic structure of the aggregate}

An \emph{ARW configuration} in $\Z^d$ is a map \[ \eta : \Z^d \to \N \cup \{ \sleep \} \] 
where  $\eta (x) = \sleep $ indicates that there is a sleeping particle at $x \in \Z^d$, and $\eta (x) = k $ indicates that there are $k$ active particles at $x$.  
We write 
	\[ \Stab : (\N \cup \{\sleep\})^{\Z^d} \to \{0,\sleep\}^{\Z^d} \]
for the operation of \emph{stabilizing} an ARW configuration $\eta$: running ARW dynamics until all particles fall asleep.
\footnote{$\Stab(\eta)$ is always defined if $\eta$ has finitely many particles, but it may be undefined in general. The situation of infinite $\eta$ is discussed in Section~\ref{s:stationaryergodic}.}

Consider $\Stab ( n \delta_0)$, the ARW aggregate formed by stabilizing $n$ particles at the origin in $\Z^d$.
We will rescale the aggregate and take a limit as $n \to \infty$. For $x \in \R^d$ let
 	\[ a_n(x) = 1_{\big\{\Stab(n\delta_0) (\floor{n^{1/d} x})=\sleep\big\}}. \]
Write $f_n \weakstar f$ for weak-$*$ convergence: $\int f_n \phi \d x \to \int f \phi \d x$ for all bounded continuous test functions $\phi$ on $\R^d$, where $\d x$ is Lebesgue measure on $\R^d$.

\begin{conjecture} \moniker{Uniformity of the aggregate} \label{c.uniform}
The rescaled ARW aggregates $a_n$ satisfy
	\[ a_n \weakstar \zeta_a \one_B \]
with probability one, where $B$ is the origin-centered ball of volume $1/\zeta_a$ in $\R^d$.
\end{conjecture}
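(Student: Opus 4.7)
The plan is to combine Conjecture~\ref{c.agg}, which already pins the macroscopic support of $a_n$ to the ball $B$, with a local-uniformity argument inside $B$. Mass conservation gets us halfway for free: $\Stab(n\delta_0)$ contains exactly $n$ sleeping particles at distinct sites, so $\int a_n \,\d x = 1 = \zagg \,\mathrm{vol}(B) = \int \zagg \one_B \,\d x$ deterministically, which is already consistent with the claimed weak-$*$ limit. The content of Conjecture~\ref{c.uniform} beyond Conjecture~\ref{c.agg} is therefore that there are no density fluctuations at the macroscopic scale, i.e., for every sufficiently regular macroscopic subdomain $U \subset B^\circ$,
\[ n^{-1} \#\{ x \in n^{1/d} U : \Stab(n\delta_0)(x) = \sleep \} \longrightarrow \zagg \, \mathrm{vol}(U). \]

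My strategy would be to use the abelian property of ARW at multiple scales together with Conjecture~\ref{c.agg}. Fix a small ball $B(y,r) \subset B^\circ$ and let $m = \lfloor \zagg \, \mathrm{vol}(B(y,r)) \, n \rfloor$; by Conjecture~\ref{c.agg} a point source of $m$ particles placed near $\lfloor n^{1/d} y \rfloor$ stabilizes to an aggregate essentially filling $n^{1/d} B(y,r)$. Using abelianness, one compares $\Stab(n\delta_0)$ against the two-source configuration $m \delta_{\lfloor n^{1/d} y \rfloor} + (n-m) \delta_0$, which is a perturbation of $n \delta_0$ by a macroscopic displacement of part of the mass. If one can show that this perturbation changes the density inside $B(y,r)$ by only $o(1)$, then the local density near $y$ inherits its value from the shifted source and must equal $\zagg$, proving uniformity. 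A variant is to stabilize in two temporal stages — first the interior source, then the $n-m$ particles at the origin — which makes the local density at $y$ manifestly $\zagg$ \emph{before} the outer particles arrive, and reduces matters to controlling how the outer batch perturbs the interior configuration.

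The main obstacle is the one the authors highlight: ARW is not monotone in the initial condition. The $n-m$ particles added in the second stage can pass through $B(y,r)$, wake up sleeping particles there, and set off avalanches, so the sub-aggregate formed in the first stage is not preserved. Quantifying the net rearrangement inside $B(y,r)$ requires a microscopic understanding of the odometer function $u_n(x)$ (total number of emissions from $x$ during stabilization). The natural target is a deterministic PDE limit for a suitable rescaling of $u_n$, phrased as an obstacle-type free boundary problem, from which Conjecture~\ref{c.uniform} would follow via a relation between the sleeping-particle density and the measure-theoretic Laplacian of the limit odometer. An alternative route is to couple the point-source dynamics near an interior point with infinite-volume stationary ARW at the critical density $\zth$, transferring uniformity from the stationary state; this would require the identification $\zagg = \zth$, itself one of the universality conjectures of the paper. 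Either route demands genuinely new quantitative control over ARW avalanches, and this is where the central difficulty lies.
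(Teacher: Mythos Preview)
The statement is a \emph{conjecture} in the paper, not a theorem: the authors offer no proof, only the one-sentence gloss immediately following it (``in the weak-$*$ scaling limit, the random locations of the sleeping particles in the aggregate blur out to a constant density $\zagg$ everywhere in the ball''). There is therefore no paper proof to compare your proposal against.

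Your write-up is not a proof either, and you are explicit about this: you sketch two heuristic routes (two-stage abelian stabilization with an interior source; coupling to infinite-volume stationary ARW) and then correctly identify the obstructions that block each of them. Two remarks on the substance. First, the whole argument is conditional on Conjecture~\ref{c.agg}, which is itself open, so at best you would be proving an implication between conjectures. Second, the two-stage strategy is more fragile than your phrasing suggests. The abelian property tells you that the \emph{final} law of $\Stab(n\delta_0)$ equals the final law of the two-stage procedure, but it gives no comparison between the intermediate configuration (after the interior source has stabilized) and the final one. When the outer $n-m$ particles are released, they can wake a positive fraction of the sleepers in $n^{1/d}B(y,r)$ and export or import mass across its boundary; nothing in abelianness bounds that net flux. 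So the step ``this perturbation changes the density inside $B(y,r)$ by only $o(1)$'' is not a perturbation estimate at all---it is essentially the full conjecture restated locally. Your alternative route via $\zagg=\zth$ and coupling to a stationary state invokes Conjecture~\ref{c.densities} and an as-yet-nonexistent local coupling, so it is equally conjectural. In short: reasonable heuristics, no proof, and the paper is in the same position.
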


In other words, in the weak-$*$ scaling limit, the random locations of the sleeping particles in the aggregate blur out to a constant density $\zeta_a$ everywhere in the ball. 

\subsection{Microscopic structure of the aggregate}

\begin{figure}[h]
\includegraphics[width=.8\textwidth]{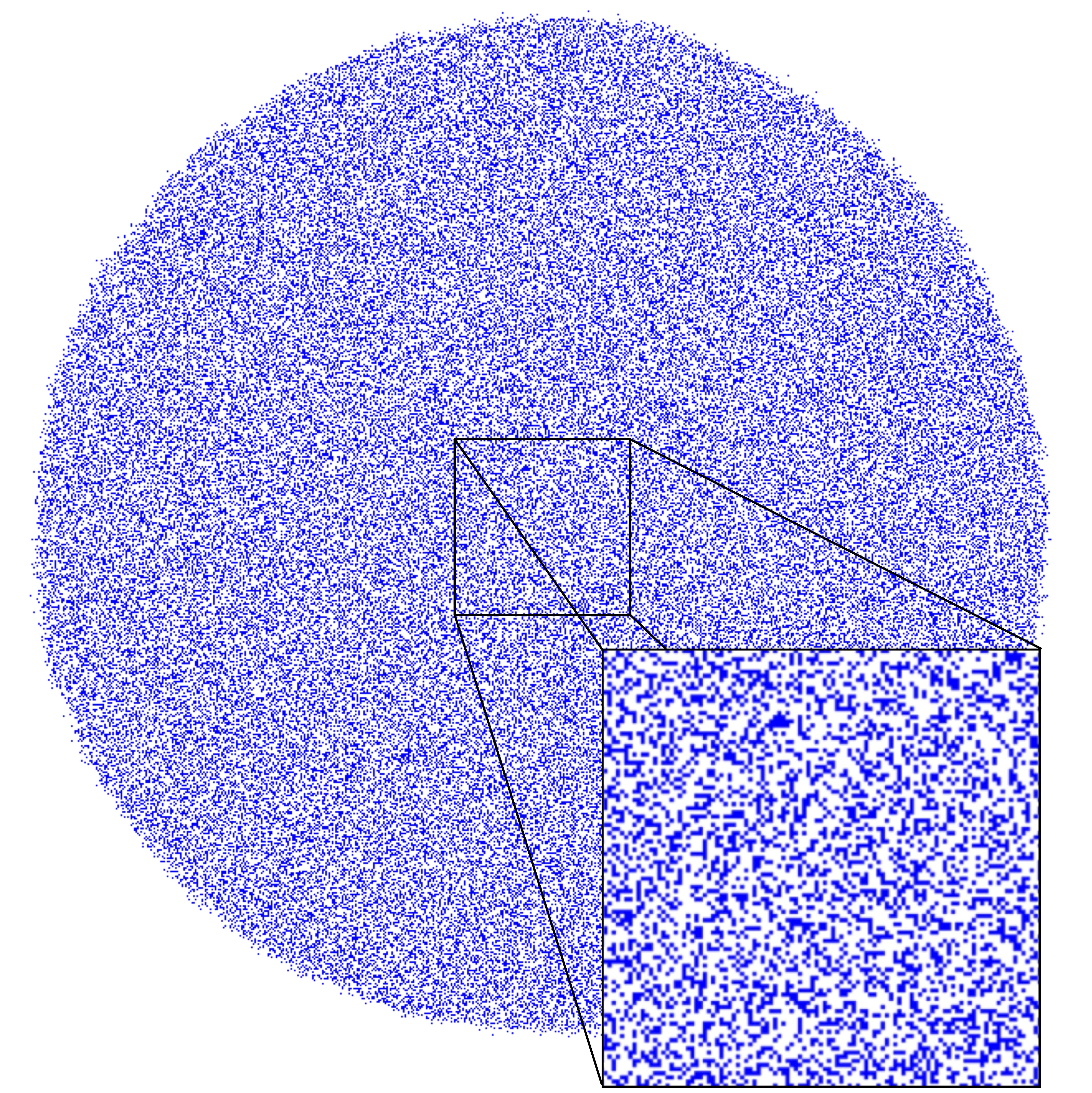}
\caption{An ARW aggregate of $100000$ particles in $\Z^2$ at sleep rate $\lambda = 0.25$, with a zoom-in of the microscopic structure deep inside.
\label{f:zoomin}
}
\end{figure}

The next conjecture zooms in to the fine scale random structure of the aggregate near the origin (Figure~\ref{f:zoomin}). 

Write $\alpha_n$ for the law of the aggregate $\Stab(n\delta_0)$. This is a probability measure on $\{0,\sleep\}^{\Z^d}$. We examine its marginals
\footnote{For a probability measure $\mu $ on $\{0,\sleep\}^{\Z^d}$ and a finite set $V \subseteq \Z^d$, we write  $\mu |_V $ for the marginal distribution on $\{0,\sleep\}^V$, that is 
 $ \mu |_V (\xi ) := \mu ( \{ \eta \in \{ 0,\sleep\}^{\Z^d} : \eta (v) = \xi (v) \; \forall v \in V \} )$, for $\xi \in \{0,\sleep\}^V$.
}
on a finite subset of $\Z^d$, as $n \to \infty$. 

\begin{conjecture} \label{c.micro} \moniker{Microscopic limit of the aggregate} 
For all finite $V \subset \Z^d$ and all $\xi \in \{ 0, \sleep \}^V $, the sequence $\alpha_n |_V(\xi)$ converges as $n \to \infty$.
\end{conjecture}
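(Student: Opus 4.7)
The plan is to prove convergence of the marginals $\alpha_n|_V$ by identifying the limit with the restriction to $V$ of a natural infinite-volume stationary measure, and ruling out other subsequential limits via the abelian property. Tightness is immediate since $\{0,\sleep\}^V$ is finite, so the task reduces to showing uniqueness of subsequential limits.

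The natural candidate is $\nu_{\zagg}|_V$, where $\nu_\rho$ is the infinite-volume stabilization of a Poisson$(\rho)$ field of active particles on $\Z^d$, which should be well-defined for $\rho < \zth$ (as foreshadowed in Section~\ref{s:stationaryergodic}). To connect $\alpha_n|_V$ to this candidate, fix a large box $B_R \supset V$ and use the abelian property to couple $\Stab(n\delta_0)|_{B_R}$ to an ``interior stabilization'' inside $B_R$ driven by an effective boundary input on $\partial B_R$ (accumulated from active particles that would enter or re-enter $B_R$ during the global dynamics). A parallel decomposition applies to $\nu_{\zagg}$ restricted to $B_R$. By Conjectures~\ref{c.agg} and~\ref{c.uniform}, for $n \gg R^d$ the total mass of the boundary input into $B_R$ is $\zagg |B_R|(1+o(1))$ with high probability, and its empirical distribution on $\partial B_R$ should be close in a suitable sense to the one produced by an infinite density-$\zagg$ reservoir.

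The main obstacle is converting closeness of boundary inputs into closeness of interior configurations on $V$. This is a quantitative \textbf{spatial mixing} statement for stable ARW configurations: the law on $V$ should be asymptotically insensitive to the boundary input on $\partial B_R$ as $R \to \infty$. I expect this to be the crux of the argument. Avalanches in ARW can propagate over macroscopic distances, and while the critical-like, self-averaging behavior observed in simulations makes such mixing plausible, there are no current techniques to prove it in dimension $d \geq 2$. The required decoupling is closely related to Conjecture~\ref{c.bulkaggdensity} and Question~\ref{q.measures}, and progress on either would likely yield Conjecture~\ref{c.micro}. A soft, purely abelian alternative would try to show that all subsequential limits are fixed points of a natural stabilization operator and then argue uniqueness of this fixed point, but identifying that fixed point appears to require essentially the same mixing input.
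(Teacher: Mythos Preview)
The statement you are attempting is Conjecture~\ref{c.micro}, and the paper does \emph{not} prove it: it is stated as an open conjecture, with no proof or proof sketch given. So there is no ``paper's own proof'' to compare against.

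Your proposal is not a proof either, and you are candid about this. You correctly reduce the problem to uniqueness of subsequential limits (tightness being trivial on a finite state space), and you propose to identify the limit with an infinite-volume stationary measure via an abelian decomposition into boundary input on a large box $B_R$. The decisive step---that the law on $V$ is asymptotically insensitive to the precise boundary input on $\partial B_R$---is exactly the spatial mixing property that the paper itself flags as unproven and central (see the discussion around Question~\ref{q.measures}, Conjecture~\ref{c.bulkdensity}, and the broader theme of spatial mixing in the introduction). You explicitly say ``there are no current techniques to prove it in dimension $d \geq 2$,'' which is an accurate assessment. Note also that your candidate limit $\nu_{\zagg}$ presupposes $\zagg < \zth$ (or at least that the subcritical limit $\mu$ of Conjecture~\ref{c.subcritical} exists and extends to density $\zagg$), which is itself part of the conjectural picture (Conjecture~\ref{c.densities} posits $\zagg = \zth$, so you would be stabilizing exactly at criticality). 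In short: your outline is a reasonable research program consistent with the paper's heuristics, but it is not a proof, and the paper does not claim one.
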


This conjecture would imply, by Kolmogorov's extension theorem, the existence of the infinite-volume limit 	
	\[ \alpha := \lim_{V \uparrow \Z^d} \lim_{n \to \infty} \alpha_n |_V, \]
which is a probability measure on the set of infinite stable configurations $\{0,\sleep\}^{\Z^d}$.   The outer limit is over an exhaustion of $\Z^d$, that is, a sequence of finite sets $V_1 \subset V_2 \subset \cdots$ such that $\bigcup_{n \geq 1} V_n = \Z^d$. To spell the limit out: For any finite $V \subset \Z^d$ and any configuration $\xi \in \{0,\sleep\}^V$,
	\[ \alpha_n |_V (\xi) \to \alpha |_V (\xi). \]
Note the order of limits: we are restricted to a fixed window $V$ as the size of the aggregate $n \to \infty$. Even though $\alpha$ is supported on configurations with an infinite number of particles, a sample from $\alpha$ is best imagined as tiny piece of an even larger aggregate!

\begin{conjecture} \label{c.translation}
The limit $\alpha$ is invariant with respect to translations of $\Z^d$.
\end{conjecture}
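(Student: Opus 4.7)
The strategy is to reduce translation invariance of $\alpha$ to a source-independence statement. Fix $z \in \Z^d$ and a finite window $V \subset \Z^d$. By vertex-transitivity of $\Z^d$,
\[ \PP\bigl(\Stab(n\delta_0)(v+z) = \xi(v), \, v \in V\bigr) = \PP\bigl(\Stab(n\delta_{-z})|_V = \xi\bigr), \]
so granting Conjecture~\ref{c.micro}, translation invariance of $\alpha$ is equivalent to the assertion
\[ \lim_{n \to \infty} \PP\bigl(\Stab(n\delta_0)|_V = \xi\bigr) = \lim_{n \to \infty} \PP\bigl(\Stab(n\delta_{-z})|_V = \xi\bigr) \]
for every finite $V$, every $z \in \Z^d$, and every $\xi \in \{0,\sleep\}^V$: the microscopic window distribution does not depend on where the source is placed.

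To establish this source-independence, the plan is to couple $\Stab(n\delta_0)$ and $\Stab(n\delta_{-z})$ so that they coincide on $V$ with probability tending to $1$. The natural coupling is the site-wise instruction construction that makes ARW abelian (Rolla-Sidoravicius~\cite{RS11}): place an i.i.d.\ stack of walk directions and sleep flags at each vertex and run both processes using the same stacks. By the abelian property the two stable configurations become deterministic functionals of the common randomness, and one can compare their odometers site-by-site. The two initial conditions differ by the translation of $n$ particles from $0$ to $-z$, a perturbation that can be encoded as the divergence of a discrete flow supported on a finite path of length $|z|$. Combined with Conjecture~\ref{c.uniform}, which ensures that $V$ sits deep in the bulk of both aggregates for large $n$, the goal is to show that this bounded-support perturbation of the initial data does not reach the interior window $V$ in a way that changes the sleep/empty status at any site of $V$.

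The main obstacle is precisely this last step: proving that a bounded translation of the source has only a local effect on the microscopic structure of the aggregate. For the Abelian Sandpile the corresponding claim is essentially immediate from the explicit formula relating the odometer to the stable configuration via the discrete Laplacian, but for ARW the sleep dynamics breaks this link and only a weaker form of the least-action principle is available; in particular adding one particle can trigger a nonlocal avalanche of awakenings. Rigorous control of how local source perturbations propagate through the bulk will likely require new quantitative input, for instance a spatial-mixing estimate of the kind suggested elsewhere in this paper. In that sense Conjecture~\ref{c.translation} appears tightly bound up with Conjectures~\ref{c.uniform} and with the spatial-mixing conjectures, and a proof of it is likely to emerge as a by-product of proving those.
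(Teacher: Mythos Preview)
The paper offers no proof of this statement: it is stated as Conjecture~\ref{c.translation}, an open problem, with no argument or sketch attached. So there is nothing in the paper to compare your proposal against.

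As for the proposal itself, it is not a proof and you acknowledge as much. The reduction you give is correct and is the natural first move: by vertex-transitivity of $\Z^d$, translation invariance of $\alpha$ is equivalent to the source location not mattering in the $n\to\infty$ limit on any fixed window. The site-wise stack coupling is also the right framework. But the argument stops exactly where the real content begins. You write that the goal is to show a bounded translation of the source ``does not reach the interior window $V$ in a way that changes the sleep/empty status at any site of $V$,'' and then immediately concede this is the main obstacle and that it would require new spatial-mixing input. That is an honest assessment, but it means what you have written is a discussion of why the conjecture is plausible and hard, not a proof. In particular, the analogy with the Abelian sandpile is misleading: even there, a bounded perturbation of the source \emph{does} typically change the microscopic configuration near the origin (the odometer shifts by a harmonic function, which alters heights everywhere), so the mechanism you would need for ARW is genuinely a mixing/forgetting phenomenon rather than a locality one.

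In short: your reduction is sound, your identification of the obstacle is accurate, and your conclusion that the conjecture is entangled with the spatial-mixing conjectures is reasonable --- but none of this constitutes a proof, and the paper does not claim one either.
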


\begin{conjecture} \label{c.bulkaggdensity}
The limit $\alpha$ is supported on configurations of density $\zagg$.
\end{conjecture}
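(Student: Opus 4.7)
Set $p := \alpha(\eta(0) = \sleep)$.  By translation invariance (Conjecture~\ref{c.translation}), $\alpha(\eta(x) = \sleep) = p$ for every $x \in \Z^d$, so the mean density under $\alpha$ equals $p$.  To upgrade from the mean to an almost-sure density I would invoke the $\Z^d$ pointwise ergodic theorem applied to the indicator $\mathbbm{1}_{\{\eta(0)=\sleep\}}$; this requires ergodicity of $\alpha$ under translations, which I would establish as a side lemma, presumably via a spatial mixing property inherited from the microscopic limit in Conjecture~\ref{c.micro}.  Granting ergodicity, the spatial density is $\alpha$-a.s. equal to $p$, so it suffices to show $p = \zagg$.

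The identification $p = \zagg$ I would prove by a double-limit argument.  For $L \geq 1$, let $V_L := [-L,L]^d \cap \Z^d$ and
\[
f(L,n) := \frac{1}{|V_L|} \sum_{x \in V_L} \mathbb{P}\bigl(\Stab(n\delta_0)(x) = \sleep\bigr).
\]
Conjectures~\ref{c.micro} and~\ref{c.translation} immediately give $\lim_{n \to \infty} f(L,n) = p$ for each fixed $L$ (finite sum, so one may exchange limit and sum).  On the other hand, for a sequence $L = L(n) \to \infty$ with $L(n) = o(n^{1/d})$, the window $V_{L(n)}$ is microscopic on the macroscopic scale but arbitrarily large on the lattice, and in this regime Conjecture~\ref{c.uniform} should deliver $f(L(n), n) \to \zagg$.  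A diagonal/sandwich argument then forces $p = \zagg$, provided one has a quantitative rate in Conjecture~\ref{c.micro} that allows $L(n)$ to be chosen slowly enough to preserve microscopic convergence while being fast enough to see the macroscopic density.

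The hard part is the \emph{mesoscopic bridge}.  Conjecture~\ref{c.uniform} is stated as weak-$*$ convergence against bounded continuous test functions on $\R^d$; this controls densities on full macroscopic windows, but says nothing directly about windows of sub-macroscopic size.  Turning weak-$*$ uniformity into a mesoscopic density statement---namely, that $\#\{x \in V_{L(n)} : \Stab(n\delta_0)(x) = \sleep\} = (\zagg + o(1))|V_{L(n)}|$ with high probability whenever $L(n) \to \infty$ with $L(n) = o(n^{1/d})$---is a genuine strengthening of Conjecture~\ref{c.uniform} at finer scales, and is closely related to the hyperuniformity conjectures discussed later (e.g.\ Conjecture~\ref{c.hyperwired}).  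Once such a mesoscopic lemma is available, the two-scale argument above, together with ergodicity, closes the loop.
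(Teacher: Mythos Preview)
The paper does not prove this statement: Conjecture~\ref{c.bulkaggdensity} is stated as an open conjecture, with no argument offered. There is therefore no proof in the paper to compare your proposal against.

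Your proposal is not a proof either, and you essentially say so yourself. It is a conditional heuristic built on a chain of other open conjectures from the same paper (Conjectures~\ref{c.micro}, \ref{c.translation}, \ref{c.uniform}), together with two further ingredients that are not even conjectured there: ergodicity of $\alpha$ under $\Z^d$-translations, and a mesoscopic density statement that strictly strengthens the weak-$*$ convergence of Conjecture~\ref{c.uniform}. You correctly flag the latter as the ``hard part'': weak-$*$ convergence against continuous test functions on $\R^d$ gives no control on windows of size $o(n^{1/d})$, so the diagonal argument linking $p$ to $\zagg$ has a genuine gap unless one proves the mesoscopic lemma independently. Likewise, translation invariance alone does not give an almost-sure density; without ergodicity the spatial averages converge only to a conditional expectation, which could be nonconstant.

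In short, what you have written is a plausible roadmap for \emph{why} the conjecture should be true given the surrounding conjectures, not a proof, and the paper makes no claim to the contrary.
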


\section{Multiple sources}
\label{s:multiple}

\begin{figure}[ht]
\begin{tabular}{ccc}
\includegraphics[height=0.2\textheight]{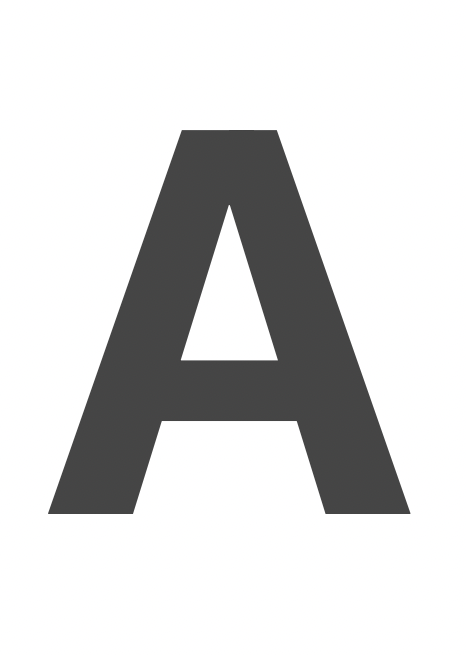} &
\includegraphics[height=0.2\textheight]{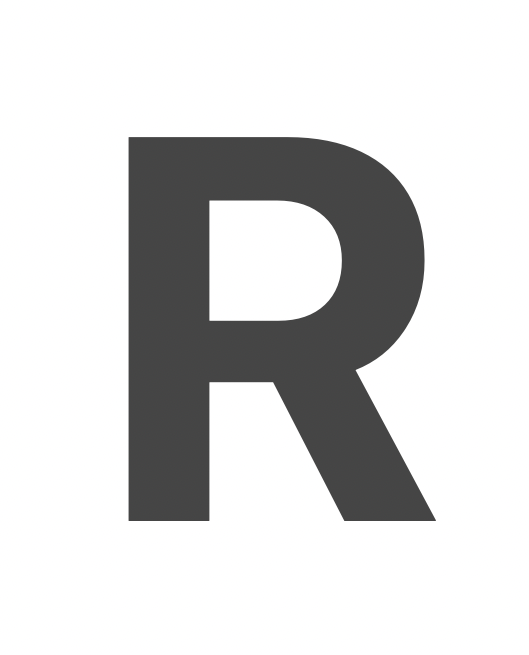} &
\includegraphics[height=0.2\textheight]{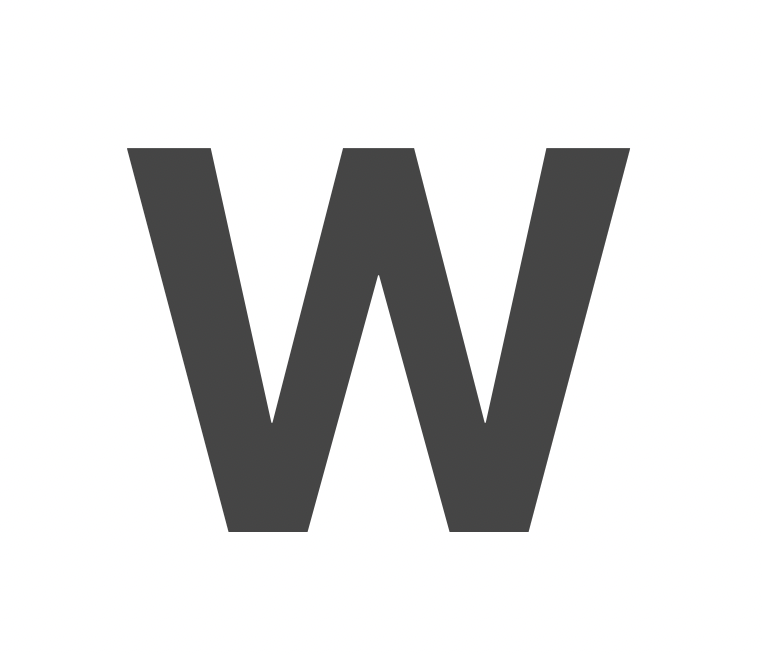} \\
$\downarrow \Stab_4$ & $\downarrow \Stab_1$ & $\downarrow \Stab_{\frac14}$ \\
\includegraphics[height=0.2\textheight]{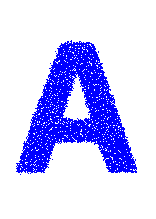} &
\includegraphics[height=0.2\textheight]{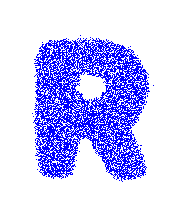} &
\includegraphics[height=0.2\textheight]{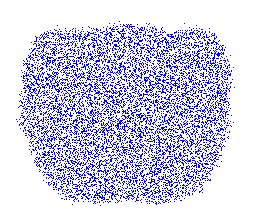} \\
 $\int_A u \geq 0.91 \int_{A^*} u$  &
 $\int_R u \geq 0.68 \int_{R^*} u$  &
 $\int_W u \geq 0.34 \int_{W^*} u$ 
 \end{tabular} 
 \caption{Top: The capital letters $A,R,W$ viewed as subsets of~$\R^2$.
 Middle: The stabilization of $1_{A \cap \eps \Z^2}$ at sleep rate 4, and $1_{R \cap \eps \Z^2}$ at sleep rate 1, and $1_{W \cap \eps \Z^2}$ at sleep rate $\frac14$. 
 Bottom: According to Conjecture~\ref{c.quadrature}, the resulting supports $A^*, R^*, W^*$ are characterized by quadrature inequalities for superharmonic functions. The constant appearing in the inequality is the aggregate density $\zagg$ ($\approx 0.91, 0.68, 0.34$ for sleep rates $4,1,\frac14$).}
\end{figure}

Let $A \subset \R^d$ be a bounded open set satisfying $\int_{\bar{A} \setminus A} \d x = 0$ where $\d x$ denotes $d$-dimensional Lebesgue measure.
 For $\eps>0$, let $\Stab (1_A \cap \eps \Z^d) $ be the configuration of sleepers that results from starting one active particle at each point of $A \cap \eps \Z^d$ and running activated random walk on $\eps \Z^d$ with sleep rate $\lambda$.  
The following conjectured scaling limit for ARW is inspired by Theorem 1.2 of \cite{LP10}, which describes the scaling limit of internal DLA in $\Z^d$.

\begin{conjecture} \moniker{Quadrature inequality} \label{c.quadrature}
As $\eps \to 0$,
	\[ \Stab (1_A \cap \eps \Z^d) \weakstar \zagg 1_{A^*} \]
where $A^*$ is the unique (up to measure zero) open subset of $\R^d$ satisfying
	\begin{equation} \label{eq:quadrature} \int_A u \d x \geq \zagg \int_{A^*} u \d x \end{equation}
for all integrable superharmonic functions $u$ on $A^*$.
\end{conjecture}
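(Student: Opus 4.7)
I would adapt the obstacle-problem strategy used by Levine–Peres \cite{LP10} for internal DLA. The central object is the ARW \emph{odometer}
\[
m_\eps(x) := \#\{\text{steps taken from } x \text{ during stabilization of } 1_{A\cap \eps\Z^d}\},
\]
which, by the abelian / least-action property of ARW, is almost surely well defined and independent of the order in which activations and topplings are carried out. Mass conservation at every site gives the discrete identity
\[
\tfrac{1}{2d}\Delta_\eps m_\eps(x) \;=\; \bar\eta_\eps(x) - 1_{A\cap \eps\Z^d}(x),
\]
where $\bar\eta_\eps := 1_{\{\eta_\eps = \sleep\}}$ is the indicator of the stabilized configuration and $\Delta_\eps$ is the discrete Laplacian on $\eps\Z^d$. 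The goal is to push this identity to the continuum, where it should read $\Delta M = \zagg\, 1_{A^*} - 1_A$ for a nonnegative limiting potential $M$ with $\{M>0\}=A^*$; the quadrature inequality then drops out of Green's identity.

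\textbf{Plan of proof.} Rescale by $M_\eps(x) := \eps^2\, m_\eps(x/\eps)$ (extended to $\R^d$ by interpolation). First, prove uniform $L^\infty$ and Lipschitz bounds for $\{M_\eps\}$ on compact sets using standard random-walk potential theory, giving precompactness; extract a subsequential limit $M \geq 0$ and put $A^* := \{M>0\}$. The crucial input is that $\bar\eta_\eps \weakstar \zagg\, 1_{A^*}$: this should follow from Conjectures~\ref{c.agg}, \ref{c.uniform} and \ref{c.bulkaggdensity}, applied locally at interior points of $A^*$ using the abelian decomposition of the $1_A$-dynamics into microscopic point-source aggregations. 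Combined with the discrete identity, this yields in the limit
\[
\Delta M \;=\; \zagg\, 1_{A^*} - 1_A \ \text{ on } \R^d, \qquad M = 0 \text{ on } (A^*)^c, \qquad M \geq 0.
\]
For an integrable superharmonic $u$ on $A^*$, Green's identity on $A^*$ together with $M\big|_{\partial A^*}=0$, $M \geq 0$, and $\partial_\nu M \leq 0$ on $\partial A^*$ gives
\[
\int_{A^*} u\,\Delta M \, dx \;=\; \int_{A^*} M\,\Delta u \, dx + \int_{\partial A^*} u\,\partial_\nu M \, d\sigma \;\leq\; 0,
\]
which rearranges to $\int_A u\, dx \geq \zagg \int_{A^*} u \, dx$ (the inclusion $A \subseteq A^*$ being forced by the obstacle structure, since mass $1_A > \zagg$ cannot remain on $A \setminus A^*$). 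Uniqueness of $A^*$ up to measure zero is a standard fact about quadrature domains of Lebesgue measure with constant density.

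\textbf{Main obstacle.} The hard step is the convergence of $M_\eps$ to the solution of a continuum obstacle problem with the correct bulk density $\zagg$. Unlike IDLA, active particles in ARW repeatedly fall asleep and are reawakened, creating correlations that defeat the martingale and concentration arguments of \cite{LP10}. One essentially needs quantitative, localized versions of the point-source Conjectures~\ref{c.agg}--\ref{c.bulkaggdensity}, propagated inside an a priori unknown region $A^*$ that is being constructed simultaneously with $M$. The likely tools are Rolla--Sidoravicius-type local flow estimates together with the spatial mixing / uniqueness-of-bulk-measure conjectures discussed later in the paper, but coupling these to the macroscopic geometry of $A^*$ will be the principal technical burden. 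A secondary difficulty is the sharp inner and outer containment for the random support of $\bar\eta_\eps$, which, as in \cite{LP10}, should come from comparison against discrete sub- and super-solutions of the obstacle problem, but the sleep dynamics require genuinely new probabilistic inputs.
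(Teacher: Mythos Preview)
The statement you are addressing is a \emph{Conjecture} in the paper, and the paper offers no proof---only a one-sentence heuristic: the sum of a superharmonic function $u$ over particle locations is approximately a supermartingale under ARW dynamics, so optional stopping yields \eqref{eq:quadrature}. Your odometer/obstacle-problem framework is the natural analytic counterpart of that probabilistic heuristic and is exactly the route one would expect, modeled on \cite{LP10}; in that sense your outline and the paper's intuition are two faces of the same idea rather than different approaches.

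That said, what you have written is not a proof but a conditional roadmap, and you say so yourself: the pivotal step $\bar\eta_\eps \weakstar \zagg\, 1_{A^*}$ is asserted to follow from Conjectures~\ref{c.agg}, \ref{c.uniform}, \ref{c.bulkaggdensity}, all of which are open. So your argument shows (formally) that Conjecture~\ref{c.quadrature} would follow from those earlier conjectures plus suitable compactness and regularity for the rescaled odometer; this is useful structural information, but it does not reduce the difficulty, since controlling the bulk density of sleepers is precisely the missing ingredient throughout the paper. One small technical slip: in your Green's-identity step you claim the boundary term $\int_{\partial A^*} u\,\partial_\nu M\,d\sigma \leq 0$, but this needs $u \geq 0$ on $\partial A^*$, which is not assumed. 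The correct argument for quadrature domains uses that in the obstacle problem both $M$ and $\nabla M$ vanish on the (regular part of the) free boundary, so the boundary term is zero rather than merely nonpositive; alternatively one works distributionally and avoids the boundary altogether.
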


The intuition behind this conjecture is that the uniform density $1$ on $A$ spreads out to uniform density $\zagg <1 $ on the larger set $A^*$.  If $u$ is a superharmonic function on $A^*$, then the sum of the values of $u$ at all particle locations is approximately a supermartingale, leading to \eqref{eq:quadrature} by optional stopping. 
In the case of multiple point sources, $A^*$ is a smash sum of Euclidean balls \cite[Theorem 1.4]{LP10}.  

The next conjecture examines the microstructure of the aggregate near $\zero$.

\begin{conjecture} \moniker{Microstructure looks the same everywhere} \label{c.microsame}
Assume $\zero \in A^*$. For any finite $V \subset \Z^d$, the law of $\Stab (1_A \cap \eps \Z^d)|_{\eps V}$ has a limit as $\eps \to 0$, in the sense that for any configuration $\eta \in \{0,\sleep\}^V$
	\[ \mathbb{P} \big( \Stab (1_A \cap \eps \Z^d)(\eps v) = \eta(v) \text{ for all } v \in V \big) \to \alpha |_V (\eta) \]
 The limiting probability measure $\alpha$ on $\{0,\sleep\}^{\Z^d}$ is the same as in Conjecture~\ref{c.micro}. In particular, $\alpha$ does not depend on $A$.
\end{conjecture}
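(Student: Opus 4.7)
My plan is to prove Conjecture~\ref{c.microsame} by comparing the multi-source and point-source stabilizations via a mesoscopic coupling based on the abelian property of ARW. Fix $n_\eps = \lceil \zagg|A|/\eps^d \rceil$ so that the point-source aggregate $\Stab(n_\eps \delta_0)$ has the same macroscopic volume as $\Stab(1_A \cap \eps \Z^d)$. Both aggregates, by Conjectures~\ref{c.uniform} and~\ref{c.quadrature}, have macroscopic density $\zagg$ near $\zero$. The target of the proof is the claim that the fine-scale statistics inside any fixed window $\eps V$ are insensitive to the global shape of the initial condition and are determined only by the local density $\zagg$.

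To formalize this, fix a mesoscopic scale $r_\eps$ with $\eps \ll r_\eps \ll 1$ and let $W_\eps = B_{r_\eps}(\zero) \cap \eps \Z^d$. Using the abelian property, the stabilization can be decomposed so that one first computes the pattern of particles entering and leaving across $\partial W_\eps$ during the outer dynamics, then stabilizes the interior with free boundary together with the recorded influx. The marginal of the final aggregate on $\eps V$ is then a measurable function of this boundary data and of independent randomness used inside $W_\eps$. I would argue that the boundary data for $\Stab(1_A \cap \eps \Z^d)$ and for $\Stab(n_\eps \delta_0)$ couple with vanishing total-variation distance as $\eps \to 0$: the continuum problem they both solve is an obstacle problem with constant source, whose interior solution is smooth and flat at leading order near any interior point, so the flux across $\partial W_\eps$ has the same limiting law up to translation. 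Once the boundary data is matched, the marginal on $\eps V$ agrees in the limit, and by Conjecture~\ref{c.micro} this common marginal must be $\alpha|_V$. Conjecture~\ref{c.translation} then extends the result from $\zero$ to any interior point of $A^*$.

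The main obstacle is establishing the universality of the boundary data. Conjectures~\ref{c.uniform} and~\ref{c.quadrature} give only weak-$*$ convergence of bulk densities, which controls neither the one-point marginals of the ARW aggregate nor the joint law of the flux across $\partial W_\eps$. What is needed is a quantitative bulk regularity statement --- a Harnack-type bound on the ARW odometer at interior points, together with hyperuniform-type fluctuation bounds on the particle count in mesoscopic windows (cf.\ Conjectures~\ref{c.hyperwired},~\ref{c.hyperfree}). I expect this regularity-plus-hyperuniformity input to be significantly harder than Conjectures~\ref{c.agg}--\ref{c.bulkaggdensity} themselves, so in practice Conjecture~\ref{c.microsame} encodes a deep universality phenomenon whose rigorous proof will likely require substantial new tools in the analysis of ARW.
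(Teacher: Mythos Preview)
The statement you are attempting to prove is Conjecture~\ref{c.microsame}, and the paper offers \emph{no proof} of it: it is presented as an open conjecture, part of a survey of interlocking open problems about Activated Random Walk. There is therefore no ``paper's own proof'' to compare against.

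Your proposal is not a proof either, and to your credit you say so explicitly in the final paragraph. What you have written is a heuristic outline of a possible strategy: couple the multi-source and point-source aggregates on a mesoscopic window using the abelian property, argue that the boundary flux data agree in law in the limit, and conclude. This is a reasonable picture of why one might \emph{believe} the conjecture, but as a proof it has at least two genuine gaps. First, it is conditional on Conjectures~\ref{c.uniform}, \ref{c.quadrature}, \ref{c.micro}, and~\ref{c.translation}, all of which are themselves open; you cannot prove a conjecture by assuming four other conjectures of comparable or greater difficulty. Second, even granting those inputs, the crucial step---that weak-$*$ convergence of bulk densities upgrades to convergence of the joint law of mesoscopic boundary flux---is exactly the step you identify as requiring ``substantial new tools,'' and you provide no argument for it. Weak-$*$ convergence says nothing about correlations or about the distribution of particle counts in shrinking windows, so the coupling you describe cannot be constructed from the stated hypotheses.

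In short: the paper does not claim to prove this statement, and neither does your proposal. Your write-up would be better framed as a heuristic discussion of the conjecture rather than as a proof attempt.
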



So far we have examined initial conditions with a finite number of particles only. The next section examines infinite configurations.

\section{Stationary Ergodic} \label{s:stationaryergodic}

For $\ARW(\Z^d, \lambda)$, start with a stationary ergodic configuration $\eta : \Z^d \to \N$, where all particles are initially active. Running ARW dynamics, will all particles fall asleep? If each site of $\Z^d$ is visited only finitely often, then we say that $\eta$ \emph{stabilizes}. Rolla, Sidoravicius, and Zindy proved the remarkable fact that stabilizing depends only on the mean number of particles per site  \[ \zeta := \mathbb{E} (\eta(\zero)) . \]

\begin{theorem} \label{t.RSZ} \moniker{Universality of threshold density $\zth$, \cite{RSZ}}
	There exists a constant $\zth = \zth(\Z^d, \lambda)$ such that if $\zeta < \zth$ then
	$\eta$ stabilizes with probability $1$, and if $\zeta > \zth$ then with probability $1$, $\eta$ does not stabilize.
\end{theorem}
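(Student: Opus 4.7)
The plan is to establish universality of the threshold $\zth$ in three stages: reduce stabilization to a translation-invariant event (for a $0$--$1$ law from ergodicity), prove monotonicity of stabilization in the initial profile, and then show that stabilization depends on the stationary ergodic law only through its mean. The threshold is then defined as the supremum of densities for which stabilization holds almost surely.

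I would first make sense of stabilization for infinite $\eta$ by finite-volume approximation. For finite $V \subset \Z^d$, the restriction $\eta|_V$ can be stabilized inside $V$ using the usual toppling and sleep rules, with particles that leave $V$ considered absorbed. By the abelian property of ARW, the resulting odometer $m_V(x)$ counting active visits at each site is a function of $\eta$ and an auxiliary source of instructions, independent of the toppling order. Monotonicity of $m_V$ in $V$ produces a limit $m(x) \in [0,\infty]$, and $\eta$ stabilizes by definition when $m < \infty$ a.s. Since $\{m(\zero) < \infty\}$ is translation invariant, ergodicity forces $\PP(\eta \text{ stabilizes}) \in \{0,1\}$. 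Monotonicity in the configuration then follows from the same abelian coupling: if $\eta \le \eta'$ pointwise on a shared instruction space, then $m \le m'$ pointwise, so stabilization is downward closed under pointwise stochastic domination.

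The crux is showing that whether a stationary ergodic $\eta$ stabilizes depends only on its mean $\zeta = \EE[\eta(\zero)]$. Given two stationary ergodic laws $\mu, \mu'$ with common mean, I would build a translation-invariant coupling on $(\N^{\Z^d})^2$ that realizes $\mu'$ as a \emph{local rearrangement} of $\mu$: a particle redistribution over a.s.\ finite distances. Existence of such a coupling rests on a mass-transport argument, which is possible precisely because $\mu$ and $\mu'$ share a mean. The abelian property then implies that such a rearrangement perturbs the odometer at any fixed site by only a finite amount, so $m$ is finite under $\mu$ iff it is finite under $\mu'$. Combined with the monotonicity from the previous step, this justifies defining $\zth := \sup\{\zeta : \text{every stationary ergodic } \mu \text{ with mean } \zeta \text{ stabilizes}\}$ and yields the theorem.

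The main obstacle is controlling how accumulated local rearrangements perturb the odometer. A single local rearrangement shifts the odometer only finitely, but summing infinitely many shifts is delicate because the a.s.\ bounds are not quantitative. I would mitigate this by first comparing each $\mu$ with a Bernoulli product measure of the same density as an intermediate benchmark, and then chaining two such comparisons via an interlacing argument that performs rearrangements in an increasing sequence of windows and carefully tracks the cumulative effect on the odometer at a fixed site. Making this interlacing work uniformly enough to pass to the $V \uparrow \Z^d$ limit is, I expect, the essential technical difficulty, and it is here that the positive sleep rate $\lambda > 0$ should provide the damping that keeps per-site corrections summable.
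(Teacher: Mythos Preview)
The paper does not prove this theorem at all: it is stated as a result of Rolla, Sidoravicius and Zindy and cited to \cite{RSZ} without any proof or sketch. There is therefore no ``paper's own proof'' to compare your proposal against.

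As a standalone sketch, your first two stages (the $0$--$1$ law via ergodicity, and monotonicity of the odometer via the abelian property) are correct and standard. The third stage, however, contains a real gap. Your plan is to pass between two ergodic laws of the same mean by a translation-invariant ``local rearrangement'' and then argue that moving a particle a bounded distance perturbs the odometer at a fixed site only finitely. But relocating a particle from $x$ to $y$ is \emph{not} an ARW-legal operation: the abelian property lets you reorder topplings and sleep attempts, not teleport particles. To simulate a move from $x$ to $y$ within the ARW framework you would need the particle to walk there, potentially waking sleepers along the way and triggering cascades; there is no a priori finite bound on the resulting change in $m(\zero)$, and your appeal to ``damping from $\lambda>0$'' is heuristic at best (the actual argument in \cite{RSZ} makes no quantitative use of $\lambda$). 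The proof in \cite{RSZ} avoids this issue: rather than rearranging, it sandwiches an arbitrary ergodic law of mean $\zeta$ between i.i.d.\ laws of means $\zeta\pm\epsilon$ using a translation-equivariant allocation/embedding, so that the comparison is by genuine pointwise domination $\eta\le\eta'$ on a common probability space, to which the monotonicity you already established applies directly. Your instinct to route through an i.i.d.\ benchmark is right; what is missing is that the comparison must be by domination, not by mass-preserving rearrangement.
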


\subsection{Approaching the threshold from below}

Theorem~\ref{t.RSZ} ensures that the stabilization $\Stab(\eta)$ is always defined if $\zeta < \zth$.
What happens to the microstructure of $\Stab(\eta)$ as $\zeta \uparrow \zth$?
Start with a stationary ergodic configuration $\eta_0 : \Z^d \to \N \cup \{\sleep\}$ with mean $\zeta_0 < \zth$, and sprinkle some extra active particles:  
Letting $(\xi_t(x))_{x \in \Z^d}$ be independent Poisson random variables with mean $t < \zth - \zeta_0$, the configuration $\eta_0 + \xi_t$ stabilizes with probability 1.

\begin{conjecture} \moniker{Universal limit of subcritical measures} \label{c.subcritical}
Fix $\lambda>0$ and 
let $\mu_t$ be the law of the ARW stabilization of 
$\eta_0+\xi_t$ with sleep rate $\lambda$. 
There exists a limiting measure
	\[ \mu := \lim_{t \uparrow \zth-\zeta_0} \mu_t \]
supported on configurations  $ \eta \in \{0,\sleep\}^{\Z^d} $ of density $\zth$. 
Moreover, $\mu$ depends only on $\lambda$ and not on the initial configuration $\eta_0$.
\end{conjecture}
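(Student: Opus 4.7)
The plan is to establish the three assertions of the conjecture in turn: existence of the limit, concentration of the limit on configurations of density $\zth$, and universality with respect to the initial law $\eta_0$.

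For existence, I would first invoke the abelian property of ARW to couple the stabilizations of $\eta_0+\xi_s$ and $\eta_0+\xi_t$ for $s\le t$ on a single probability space, by completing $\xi_s$ to $\xi_t$ with an independent Poisson increment. In this coupling the odometer is monotone in $t$, so that every site visited under sprinkle $s$ is also visited under $t$, and the set of sites that have ever been active only grows. Since $\{0,\sleep\}^{\Z^d}$ is compact in the product topology, tightness of $\{\mu_t\}_{t<\zth-\zeta_0}$ is automatic and any sequence $t_n\uparrow\zth-\zeta_0$ admits a subsequential limit $\mu$. Because the shift acts jointly on $\eta_0$ and $\xi_t$, each $\mu_t$ is translation-invariant, and so is every limit point.

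For the density claim, mass conservation during stabilization gives $\EE[\Stab(\eta_0+\xi_t)(\zero)]=\zeta_0+t$ for every admissible $t$, since by Theorem \ref{t.RSZ} stabilization terminates almost surely and no particle is lost. Passing to the limit yields density $\zth$ for any subsequential limit. Translation invariance together with density $\zth$ already carves out a meaningful class of candidate measures, but to promote subsequential convergence to genuine convergence one must prove uniqueness.

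The heart of the argument, and the hardest step, is universality. Given two admissible initial laws $\eta_0$ and $\eta_0'$ with means $\zeta_0,\zeta_0'<\zth$, I would use the abelian property in two-stage form: sprinkle by $t-\delta$ and stabilize, then add an independent sprinkle of intensity $\delta$ and stabilize again; by abelianness the resulting law is $\mu_t$. The strategy is to show that, as $t\uparrow\zth-\zeta_0$, even a small additional sprinkle $\delta>0$ triggers avalanches whose trace covers any fixed finite window $V\subset\Z^d$ with probability tending to one, and that the restriction of the resulting sleeper configuration to $V$ depends asymptotically only on the near-critical ambient environment, not on the specific law of $\eta_0$. Concretely this requires, first, a quantitative divergence of the range of activity of a sprinkled particle as the ambient density approaches $\zth$, and second, a coupling lemma saying that, conditional on an avalanche visiting $V$, the configuration inside $V$ at the end of stabilization is close in total variation for the two starting laws. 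Combined, these ingredients would force $\mu_t|_V$ and $\mu'_t|_V$ to merge as $t\uparrow\zth-\zeta_0$, yielding uniqueness of the subsequential limit and independence from $\eta_0$.

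The main obstacle is the first ingredient: quantitative control of avalanche sizes as the density approaches $\zth$ is precisely the long-range order that characterizes self-organized criticality, and establishing it for ARW on $\Z^d$ with $d\ge 2$ is still open. In $d=1$, the methods of \cite{LS2} combined with the known sharpness of $\zth$ make the program plausibly tractable, but higher dimensions appear to require new tools, perhaps a multi-scale or renormalization analysis built around the abelian structure and monotone odometer coupling above.
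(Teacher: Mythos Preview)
The statement you are addressing is a \emph{conjecture} in the paper, not a theorem; the paper offers no proof and presents it as open. So there is no ``paper's own proof'' to compare against, and what you have written is, appropriately, a research program rather than a proof. You are candid about this in your final paragraph.

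On the substance of the program: your Steps 1 and 2 are essentially correct but also essentially soft. Compactness of $\{0,\sleep\}^{\Z^d}$ gives subsequential limits for free, and mass conservation plus translation invariance pins the density of any such limit at $\zth$. The monotone odometer coupling you mention is true but does not advance the argument: the odometer is monotone in $t$, but the stable configuration $\Stab(\eta_0+\xi_t)$ is \emph{not} monotone in $t$ (a sleeper present after sprinkle $s$ can be awakened and relocated by the extra sprinkle $t-s$), so this coupling does not upgrade subsequential convergence to genuine convergence.

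The real content of the conjecture is your Step 3, and here the proposal has a genuine gap that you partially acknowledge. The ``coupling lemma'' you posit --- that conditional on an avalanche covering $V$, the post-stabilization configuration on $V$ is close in total variation regardless of the initial law --- is not a reduction to anything known; it is essentially a restatement of the universality claim itself. Divergence of avalanche range as $\zeta\uparrow\zth$ is necessary but far from sufficient: large avalanches need not \emph{erase} memory of the environment they propagate through, and for ARW there is no analogue of the Markov-chain mixing or spanning-tree machinery that makes such statements tractable for other models. So as written, the proposal identifies the right decomposition (existence, density, universality) and the right difficulty, but does not supply a mechanism for the crucial third step beyond what the conjecture already asserts.
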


\section{The wired Markov chain} \label{s:wiredchain}

Fix a finite set $V \subset \Z^d$, and consider the particle system $\ARW(V, \lambda )$ in which particles evolve as in ARW with sleep rate $\lambda$, with the additional rule that when a particle 
exits $V$ it is killed (i.e.\ removed from the system).  Fix $v \in V$. The ARW \emph{wired Markov chain} $(w_k)_{k \geq 0 }$ on the state space $\{0,\sleep \}^V$ has the update rule: add one active particle at $v$ and stabilize, i.e.
	\[ w_{k+1} = \Stab_V (w_k + \delta_v), \]
where $\Stab_V$ denotes ARW stabilization with killing of any particles that exit $V$.

\subsection{Stationary distribution}
The stationary distribution of the Markov chain $(w_k)_{k \geq 0}$ does not depend on the choice of the site $v$ where particles are added, as for different $v$ the Markov transition operators commute! The next result gives an efficient way to sample exactly from the stationary distribution of this chain.

Start with the configuration $\one_V$, consisting of one active particle on each site of $V$, and let the particles perform $\ARW(V , \lambda )$ until no active particles remain. Some particles exit the system, and the remaining particles fall asleep in $V$. 
Denote by $\Stab_V(\one_V)$ the resulting random configuration of sleepers. 

\begin{prop}[Exact sampling, \cite{LL}]
The law of $\Stab_V(\one_V)$ is the unique stationary distribution of the ARW wired Markov chain on $V$ with sleep rate $\lambda$. 
\end{prop}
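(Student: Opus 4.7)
The plan is to use the abelian property of ARW stabilization on $V$ with wired boundary: using shared i.i.d.~instruction stacks $(\sigma_x)_{x\in V}$, one has the pointwise identity
\[ \Stab_V(\mu_1 + \mu_2) = \Stab_V(\Stab_V(\mu_1) + \mu_2) \]
for any finitely supported $\mu_1, \mu_2 \in \N^V$. In particular, the single-site addition operators $T_v(\eta) := \Stab_V(\eta + \delta_v)$ all commute, which is what makes the chain well-defined independently of the driving sequence. Let $\pi$ denote the law of $\Stab_V(\one_V)$.

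For stationarity I would sample $W \sim \pi$ by realizing $W = \Stab_V(\one_V)$ on some instruction stacks, so that
\[ \Stab_V(W + \delta_v) = \Stab_V(\Stab_V(\one_V) + \delta_v) = \Stab_V(\one_V + \delta_v). \]
This reduces stationarity to the key identity $\Stab_V(\one_V + \delta_v) \stackrel{d}{=} \Stab_V(\one_V)$. The intuition is that the extra active particle placed at $v$ finds every site of $V$ already occupied by an active particle, so at no point during its life can it be alone, so it never executes a sleep instruction and must eventually exit $V$ through the wired boundary; what remains is an ordinary stabilization of $\one_V$. I would implement this via a tagged-walker coupling: process instructions in an order that keeps one distinguished particle at $v$ moving whenever it shares a site with another active particle, then invoke abelian invariance at all other sites to argue that the walker's eventual absorption at $\partial V$ (which happens almost surely since $V$ is finite) leaves the marginal law of the final sleepers unchanged. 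This key lemma is the main obstacle: the tagged walker does consume instructions at sites it visits, so a careful comparison of the odometer functions of $\Stab_V(\one_V)$ and $\Stab_V(\one_V + \delta_v)$ is needed in order to transfer the equality across that coupling.

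For uniqueness, I would iterate the same scheme. By induction on the number of sleepers in a stable $\xi$, using the decomposition $\Stab_V(\xi + \one_V) = \Stab_V(\Stab_V(\xi' + \one_V) + \delta_u)$ (with $\xi'$ equal to $\xi$ minus one sleeper at $u$) and the stationarity already established, one gets $\mathrm{Law}(\Stab_V(\xi + \one_V)) = \pi$ for every stable $\xi$. Hence, for any enumeration $v_1,\dots,v_{|V|}$ of $V$, the composition $T_{v_{|V|}} \circ \cdots \circ T_{v_1}$ maps any initial state to a random configuration of law $\pi$. Thus every state reaches the support of $\pi$ in at most $|V|$ steps, and conversely any two states in $\mathrm{supp}(\pi)$ reach each other with positive probability, so $\mathrm{supp}(\pi)$ is a single recurrent class for this finite-state Markov chain and $\pi$ is its unique stationary distribution.
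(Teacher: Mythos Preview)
The paper does not actually prove this proposition; it is stated as a result of \cite{LL} with no accompanying argument, so there is no ``paper's own proof'' to compare against here. That said, your overall strategy matches the one in \cite{LL}: reduce stationarity to the key identity $\Stab_V(\one_V + \delta_v) \stackrel{d}{=} \Stab_V(\one_V)$ via a tagged-walker coupling, then iterate for uniqueness.

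You correctly flag the main obstacle---the tagged walker consumes instructions---but your proposed resolution (``a careful comparison of the odometer functions'') is not what closes the gap. The actual fix is a fresh-randomness argument: topple only at the tagged particle's current site, so all other particles stay frozen and the tagged walker always has company; it therefore ignores sleep instructions and performs simple random walk until absorbed at $\partial V$. This trajectory is an adaptive sequential exploration of the i.i.d.\ instruction stacks, so conditionally on the trajectory the \emph{unused} tails of the stacks are still i.i.d.\ with the original law. Stabilizing the remaining configuration $\one_V$ with those tails thus has exactly the law of $\Stab_V(\one_V)$. A second, smaller gap: your uniqueness argument composes $T_{v_{|V|}}\circ\cdots\circ T_{v_1}$ over an enumeration of $V$, but the chain in the paper adds at a \emph{fixed} site $v$. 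To repair this, observe that for large $n$ one may partially stabilize $\xi + n\delta_v$ by first running $n\delta_v$ as internal DLA until (with positive probability) every site of $V$ is covered; the configuration is then $\geq \one_V$, and the iterated key lemma gives law $\pi$, so every state reaches $\mathrm{supp}(\pi)$ under $T_v$ alone.
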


\begin{figure}[ht]
\centering
\setlength{\tabcolsep}{3mm}
\begin{tabular}{ccc}
    \includegraphics[width=0.28 \textwidth]{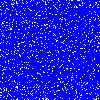} &
    \includegraphics[width=0.28 \textwidth]{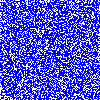} &
    \includegraphics[width=0.28 \textwidth]{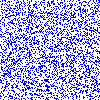} \\
    $\lambda=4$ & $\lambda=1$ & $\lambda=0.25$ \\
    $\zsta \approx 0.91$ & $\zsta \approx 0.68$ & $\zsta \approx 0.34$

\end{tabular}
\caption{\textsl{Stationary configurations for the ARW wired chain on a $100\times100$ box, at three different sleep rates $\lambda$. The stationary density $\zeta_s$ is an increasing function of $\lambda$.}}
\label{f:wired}
\end{figure}

For any given set $V \subset \Z^d$ let $\partial V $ denote its boundary and $\# V$ denote its cardinality. 
Write $|\Stab_V (\one_V)|$ for the total number of (sleeping) particles in $\Stab_V (\one_V)$. 

\begin{conjecture} \moniker{Stationary density $\zsta$} \label{c.stationary}
Then there exists  a constant $\zsta = \zsta(\Z^d,\lambda)$ such that for any exhaustion $V_1 \subset V_2 \subset \cdots \subset \Z^d$ satisfying $\# ( \partial V_n ) / \# V_n \to 0$ as $n\to\infty $, 		\[ \lim_{n\to\infty } \frac{|\Stab_{V_n} (\one_{V_n})|}{\# V_n} = \zsta \]
in probability. 	

\end{conjecture}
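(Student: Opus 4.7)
The plan is to prove the conjecture in two stages: first show that the expectation $\EE[|\Stab_{V_n}(\one_{V_n})|]/\#V_n$ converges to a constant $\zsta$, and then upgrade this to convergence in probability via a concentration estimate.

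For the first stage, work with the ``escape count'' $g(V) := \#V - \EE[|\Stab_V(\one_V)|]$, which records the expected number of particles killed at $\partial V$ during stabilization from $\one_V$. Partition $V = V_1 \sqcup V_2$ and compare the original dynamics on $V$ (killing only at $\partial V$) with the pair of independent dynamics on $V_1$ and on $V_2$ (equivalently, the original dynamics on $V_1 \sqcup V_2$ with the additional rule that any particle crossing the interface between $V_1$ and $V_2$ is killed). One expects $g(V_1 \sqcup V_2) \leq g(V_1) + g(V_2)$: adding internal killing sites strictly enlarges the set of escape events, and a coupling based on the abelian property of ARW should realise this inequality deterministically. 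Combined with $0 \leq g(V) \leq \#V$, a Fekete-type argument in $\Z^d$ of the sort used for the thermodynamic limit of lattice free energies then yields a limit of $g(V_n)/\#V_n$, and hence of $\EE[|\Stab_{V_n}(\one_{V_n})|]/\#V_n$, along any F\o{}lner-type sequence satisfying $\#\partial V_n/\#V_n \to 0$, producing the constant $\zsta \in [0,1]$.

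For the second stage, set $\eta_n := \Stab_{V_n}(\one_{V_n})$ and decompose
\[ \Var\bigl(|\eta_n|\bigr) = \sum_{x \in V_n} p_x(1-p_x) + \sum_{x \neq y \in V_n} \mathrm{Cov}\bigl(1_{\{\eta_n(x) = \sleep\}},\, 1_{\{\eta_n(y) = \sleep\}}\bigr), \]
where $p_x = \PP(\eta_n(x) = \sleep)$. The diagonal is $O(\#V_n)$ automatically; the off-diagonal requires quantitative decay of the pair covariances in $|x-y|$. An alternative route is an Efron--Stein estimate in the Poisson clocks and walk increments driving the stabilization, controlling the effect on the total sleeper count of resampling a single such input variable; the abelian property ensures the output is a well-defined function of the inputs independent of the resampling order.

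The concentration step is the main obstacle: either route needs a spatial mixing input for the wired stationary measure, which is itself the content of conjectures stated later in this paper (see Question~\ref{q.measures} and Conjecture~\ref{c.bulkdensity}). A secondary obstacle in the first stage is the abelian comparison between dynamics with different killing sets -- delicate because, unlike the odometer, the sleeper count is not monotone under addition of particles, so the requisite coupling must be engineered carefully. Promising test cases in which one can hope to bypass both obstacles are the $\lambda \to \infty$ degeneration, where ARW reduces to Internal DLA and the stationary measure concentrates on $\one_V$ (trivialising the variance), and $d=1$, where \cite{LS2} already supplies partial information.
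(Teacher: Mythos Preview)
This statement is a \emph{conjecture} in the paper: the authors do not offer a proof, and indeed remark later (Section~\ref{s:contrasts}) that even for the Abelian sandpile, where strong combinatorial tools are available, results of this type required substantial machinery. So there is no ``paper's own proof'' to compare against; your write-up is a research outline, and you are candid that both stages contain open gaps.

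That said, one point deserves sharper scrutiny. Your heuristic for the subadditivity $g(V_1 \sqcup V_2) \le g(V_1) + g(V_2)$ --- ``adding internal killing sites strictly enlarges the set of escape events'' --- is not obviously correct and may be false. Killing at the interface has two competing effects: it removes the crossing particle (one extra death), but it also \emph{prevents} that particle from waking sleepers on the far side, some of whom would themselves have exited through $\partial V$ in the combined dynamics. So the net change in the escape count can go either way, and there is no evident monotone quantity (in contrast to the odometer) that settles the sign. A concrete warning: on small examples one can arrange for a single interface-crossing to trigger a cascade of exits in the combined dynamics that never occurs in the split dynamics. This is a genuine obstacle to the Fekete route, not merely a coupling detail to be ``engineered carefully.''

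For the concentration step you correctly identify that any variance bound of the form $\Var(|\eta_n|) = o((\#V_n)^2)$ already presupposes spatial decorrelation of the wired stationary measure, which is precisely what Conjectures~\ref{c.infinitevolume}, \ref{c.bulkdensity} and \ref{c.hyperwired} ask. The Efron--Stein route does not sidestep this: resampling one instruction can in principle alter an avalanche of macroscopic extent, so without an a priori bound on avalanche sizes the influence terms are not controlled. In short, both stages of your plan currently rest on inputs at least as hard as the conjecture itself.
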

\begin{conjecture} \label{c.densities}
The critical densities from Sections \ref{s:pointsource}, \ref{s:stationaryergodic} and \ref{s:wiredchain} coincide: 
	\[\zagg   = \zeta_c =  \zsta  . \]
\end{conjecture}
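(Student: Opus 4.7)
The plan is to split $\zsta = \zth = \zagg$ into two separate equalities $\zth = \zsta$ and $\zth = \zagg$, handled by combining the abelian property of ARW with the exact-sampling characterization of the wired stationary distribution. The equality $\zth = \zsta$ should be the more tractable of the two, because both quantities already live in infinite volume.

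For $\zth \leq \zsta$, pick $\zeta < \zth$ and consider iid Poisson($\zeta$) active configurations $\eta^\zeta_{V_n}$ on a cube $V_n$ with $\#\partial V_n/\#V_n \to 0$. In infinite volume $\eta^\zeta_{\Z^d}$ stabilizes by Theorem \ref{t.RSZ}, and, granted suitable odometer tail bounds in the subcritical regime, the number of particles crossing $\partial V_n$ during the infinite-volume stabilization is $o(\#V_n)$ with high probability; hence the wired stabilization $\Stab_{V_n}(\eta^\zeta_{V_n})$ retains $(\zeta - o(1))\#V_n$ sleeping particles. By monotonicity and the abelian property, the sleeper count in $\Stab_{V_n}(\eta^\zeta_{V_n})$ is dominated by that in $\Stab_{V_n}(\one_{V_n} + \eta^\zeta_{V_n})$, and the latter has the wired stationary law and hence density $\zsta + o(1)$ by Conjecture \ref{c.stationary}. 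Thus $\zeta \leq \zsta$; sending $\zeta \uparrow \zth$ yields the bound. For $\zsta \leq \zth$, extract a translation-invariant weak-$*$ subsequential limit $\pi_\infty$ of the wired stationary laws along centered cubes and sample $\eta \sim \pi_\infty$; sprinkle an independent Poisson($\eps$) field of active particles. Inside each window the perturbed configuration is dominated by $\Stab_{V_n}(\one_{V_n} + \text{sprinkle})$, again with the wired stationary law, so local monotone domination forces $\eta + \text{sprinkle}$ to stabilize on $\Z^d$. Theorem \ref{t.RSZ} then gives $\zsta + \eps \leq \zth$; send $\eps \downarrow 0$.

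To handle $\zth = \zagg$, the cleanest route uses the microscopic measure $\alpha$ furnished by Conjectures \ref{c.micro}--\ref{c.bulkaggdensity}, which is translation-invariant with density $\zagg$. If one could identify $\alpha$ with $\pi_\infty$, then $\zagg = \zsta = \zth$ would follow in one stroke. Failing such an identification, a Poisson-sprinkle argument like the one above, applied directly to $\alpha$, would still give $\zagg \leq \zth$; for $\zagg \geq \zth$ one would argue by contradiction that an aggregate with bulk density below $\zth$ could absorb additional particles without macroscopic expansion, in tension with the spherical growth rate implicit in Conjecture \ref{c.agg}.

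The hardest step will be identifying $\alpha$ with $\pi_\infty$, or even robustly arguing that $\alpha$ is ARW-stable in a sense that admits a sprinkling argument. Although both measures arise as local limits of configurations produced by the same dynamics, the point-source dynamics are driven by a strongly non-translation-invariant influx near the aggregate's boundary, and it is not a priori clear that microscopic statistics near the origin are insensitive to this. A promising direction is to couple the point-source odometer to the wired odometer on large cubes, perhaps mediated by the quadrature characterization of Conjecture \ref{c.quadrature}, but making such a coupling quantitative at the single-site level appears to require genuinely new ideas beyond the abelian toolkit currently available.
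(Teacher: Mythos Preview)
The statement you are attempting to prove is labeled \emph{Conjecture}~\ref{c.densities} in the paper, and the paper offers no proof whatsoever: the equality $\zagg = \zth = \zsta$ is posed as an open problem, not a result. So there is no ``paper's own proof'' to compare against; your proposal is an outline for attacking an open question.

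Taken on its own terms, your outline is not a proof either, and you are candid about this in places. Every step you write down is conditional on another unproved statement from the same paper: the $\zth \leq \zsta$ direction assumes Conjecture~\ref{c.stationary} (existence of $\zsta$) together with ``suitable odometer tail bounds in the subcritical regime'' that are not established anywhere; the $\zth = \zagg$ direction invokes Conjectures~\ref{c.micro}--\ref{c.bulkaggdensity} and~\ref{c.agg}; and you close by conceding that the key identification of $\alpha$ with $\pi_\infty$ ``appears to require genuinely new ideas.'' A conditional reduction of one conjecture to several others of comparable difficulty is not a proof.

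There are also internal gaps beyond the conditional inputs. In the $\zsta \leq \zth$ direction, you assert that a subsequential weak-$*$ limit $\pi_\infty$ of wired stationary laws on centered cubes is translation-invariant; the finite-volume laws are not translation-invariant, so this requires an argument (e.g.\ averaging over translates, which changes the measure). More seriously, the step ``local monotone domination forces $\eta + \text{sprinkle}$ to stabilize on $\Z^d$'' does not follow: knowing that on each finite window the configuration is stochastically dominated by a wired stationary sample says nothing about whether the \emph{infinite-volume} dynamics on $\eta + \text{sprinkle}$ terminate, because wired stabilization lets particles escape through $\partial V_n$ while the $\Z^d$ dynamics do not. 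The abelian monotonicity you need would have to compare infinite-volume odometers, not finite-volume sleeper counts. In short, the proposal is a reasonable heuristic roadmap, but it should be presented as such rather than as a proof.
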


\subsection{Infinite volume limit} \label{s:defpi}
Let $\pi_V$ denote the stationary distribution of the ARW wired Markov chain, as defined above. 
For a subset $W \subset V$,  write $\pi_{V} |_W $ for the restriction of $\pi_V$ to $W$. 

\begin{conjecture} \label{c.infinitevolume}
For any fixed finite set $W \subset \Z^d$, the measures $\pi_V |_W$ have a limit as $V \uparrow \Z^d$, and this limit does not depend on the exhaustion of $\Z^d$.  
\end{conjecture}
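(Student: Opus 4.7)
The plan is to realize all wired stabilizations $\Stab_V(\one_V)$ on a common probability space using the instruction-stack (Diaconis-Fulton) representation, and to derive convergence of $\pi_V|_W$ from almost-sure convergence of the odometer inside $W$. Concretely, at each $x \in \Z^d$ I would place an i.i.d.\ infinite stack $(\tau_x^i)_{i \geq 1}$ of ARW instructions (sleep with probability $\lambda/(1+\lambda)$, otherwise a jump to a uniformly random neighbor). For each finite $V$, the abelian property of ARW ensures that stabilizing $\one_V$ with killing outside $V$ using these stacks produces a canonical odometer $u_V : V \to \N$, and that the value $\Stab_V(\one_V)(x)$ depends only on $u_V(x)$ and the stack at $x$.

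The first real step is to establish the monotonicity
\[ u_V(x) \le u_{V'}(x) \qquad \text{for all } V \subset V' \text{ and } x \in V. \]
This is a least-action comparison: every legal firing sequence used to stabilize $\one_V$ in the $V$-system can be executed inside the larger $V'$-system, because a particle that was killed upon exiting $V$ in the smaller system is instead merely parked at its arrival site in $V' \setminus V$ and can be fired later. Further topplings are then needed to handle the extra initial particles on $V' \setminus V$ and to stabilize the parked ones, but by the abelian property these only increase the odometer on $V$. Granting monotonicity, for every exhaustion $V_n \uparrow \Z^d$ the odometer $u_{V_n}(x)$ rises to some $u_\infty(x) \in \N \cup \{\infty\}$. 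If $u_\infty(x) < \infty$ almost surely, then eventually $u_{V_n}(x)$ is constant in $n$ and the state at $x$ stabilizes, so $\Stab_{V_n}(\one_{V_n})|_W$ converges a.s.\ to a limit configuration. This yields convergence of $\pi_{V_n}|_W$ in total variation, and exhaustion-independence then follows by interleaving two given exhaustions into a single increasing sequence and invoking uniqueness of the monotone limit.

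The hard part will be proving that $u_\infty(x) < \infty$ almost surely; everything else is essentially abelian bookkeeping. This is a statement that the wired stationary measure enjoys enough local stability that the number of topplings at a fixed site does not blow up as $V$ grows. A natural attack is a flux / Green's-function estimate: by Conjecture~\ref{c.stationary} only $\sim (1-\zsta)\#V$ particles exit $V$, and $u_V(x)$ should be controlled by the total path length of these escaping particles together with the internal work of the roughly $\zsta \#V$ particles that eventually fall asleep. Turning this heuristic into a pointwise $L^1$ bound on $u_V(x)$ is delicate, because the effective random walk has regions where the sleep rate vanishes (sleeping is forbidden when active neighbors are present), so one will likely need a correlation-decay input --- adapted, for instance, from the $L^2$ methods that Rolla-Sidoravicius-Zindy used to prove Theorem~\ref{t.RSZ}, or from a coupling with a stationary infinite-volume construction at subcritical density.
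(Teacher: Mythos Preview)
This statement is labeled a \emph{Conjecture} in the paper and is presented as open; the paper offers no proof for it. So there is nothing to compare your proposal against --- you are sketching an attack on an open problem, and the right question is whether the attack can succeed.

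Your monotonicity step is correct: with shared instruction stacks, the least-action principle for ARW does give $u_V(x)\le u_{V'}(x)$ whenever $V\subset V'$, because the $V$-stabilizing toppling sequence is legal in the $V'$-system and the extra particles on $V'\setminus V$ (together with the formerly killed particles now parked there) can only add further topplings. So $u_{V_n}(x)$ is monotone along any exhaustion, and if the limit were finite you would indeed get almost-sure convergence of $\Stab_{V_n}(\one_{V_n})|_W$ and hence of $\pi_{V_n}|_W$.

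The fatal gap is that $u_\infty(x)$ should be $+\infty$ almost surely, not finite. The initial density of $\one_V$ is $1$, which is strictly supercritical (it is known that $\zth<1$, and Conjecture~\ref{c.densities} gives $\zsta=\zth$). Hence a macroscopic fraction $(1-\zsta)\#V$ of particles must be transported from the bulk to $\partial V$, and the total work $\sum_{x\in V} u_V(x)$ is at least the sum of their net displacements, which is of order $L\cdot \#V$ for $V=[-L,L]^d$. So the average odometer per site already diverges like $L$; a symmetry/comparison argument then forces $\EE\,u_V(\zero)\to\infty$ as well. In dimension $1$ this is immediate: order $L$ particles must cross the origin to exit on the far side, so $u_V(0)\gtrsim L$. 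Your own heuristic (``only $\sim(1-\zsta)\#V$ particles exit'') points in exactly this direction --- those particles do not stay near the boundary, and their transport cost is what makes $u_V(\zero)$ blow up.

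Because the monotone coupling does not stabilize, the conjecture cannot be about almost-sure convergence of coupled configurations. It is really a statement about spatial mixing: the wired boundary should be forgotten in the bulk, so $\pi_V|_W$ converges \emph{in law}. A workable approach would have to couple $\Stab_V(\one_V)|_W$ with $\Stab_{V'}(\one_{V'})|_W$ for two large $V\subset V'$ and show they agree with high probability on $W$ --- a two-replica decorrelation estimate, not a monotone-limit argument.
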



This conjecture would imply, by Kolmogorov's extension theorem, the existence of a limiting probability measure 
	\begin{equation} \label{eq:infinitevolume} 
	\pi  = \lim_{W \uparrow \Z^d} \lim_{V \uparrow \Z^d} \pi_{V} |_W. 
	\end{equation}
on the space of infinite stable configurations $w : \Z^d \to \{0,\sleep\}$.  We can then ask how this limit relates to the measures  $\alpha$ and $\mu$ from Sections \ref{s:pointsource} and \ref{s:stationaryergodic} above.

\begin{question}
\label{q.measures}
Is $\pi = \mu = \alpha$ ?
\end{question}

Can the wired boundary condition be felt deep inside $V$? We conjecture that as $V \uparrow \Z^d$, the particle density deep inside $V$ coincides with the overall density $\zeta_s$.

\begin{conjecture} \label{c.bulkdensity}
For $w \sim \pi$ we have
$\pi \{ w(\zero) = \sleep \} = \zeta_s$.
\end{conjecture}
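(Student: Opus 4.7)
The plan is to deduce $\pi\{w(\zero)=\sleep\}=\zsta$ by combining Conjecture~\ref{c.stationary} (average density) with Conjecture~\ref{c.infinitevolume} (marginal limits), bridging the two via the translation equivariance of the ARW dynamics.

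First I would note that because the $\ARW(V,\lambda)$ dynamics depends only on the relative geometry of the vertices in $V$, the pushforward of $\pi_V$ under the shift $w\mapsto w(\,\cdot\,+x)$ is $\pi_{V-x}$. In particular, for each $x\in V_n$,
$$\pi_{V_n}\{w(x)=\sleep\}=\pi_{V_n-x}\{w(\zero)=\sleep\},$$
and averaging over $x\in V_n$ yields
\begin{equation}\label{eq:avgid}
\EE_{\pi_{V_n}}\!\left[\frac{|\Stab_{V_n}(\one_{V_n})|}{\#V_n}\right]=\frac{1}{\#V_n}\sum_{x\in V_n}\pi_{V_n-x}\{w(\zero)=\sleep\}.
\end{equation}
Since the density lies in $[0,1]$, Conjecture~\ref{c.stationary} together with bounded convergence forces the left-hand side of \eqref{eq:avgid} to converge to $\zsta$ as $n\to\infty$.

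Next I would show the right-hand side converges to $\pi\{w(\zero)=\sleep\}$. Fix $\eps>0$ and suppose we can choose $r=r(\eps)$ such that $|\pi_V\{w(\zero)=\sleep\}-\pi\{w(\zero)=\sleep\}|<\eps$ for every admissible finite set $V$ containing the ball $B_r(\zero)$ of radius $r$ centered at the origin (see the obstacle below). The set $V_n^{(r)}:=\{x\in V_n:\operatorname{dist}(x,\partial V_n)\ge r\}$ consists precisely of those $x\in V_n$ for which $V_n-x\supset B_r(\zero)$, and its complement in $V_n$ has cardinality $O(r^d\,\#\partial V_n)=o(\#V_n)$ by the hypothesis $\#\partial V_n/\#V_n\to 0$. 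Splitting \eqref{eq:avgid} into contributions from $V_n^{(r)}$ and its complement,
\[\left|\frac{1}{\#V_n}\sum_{x\in V_n}\pi_{V_n-x}\{w(\zero)=\sleep\}-\pi\{w(\zero)=\sleep\}\right|\le\eps+o_n(1).\]
Letting $n\to\infty$ and then $\eps\to 0$ forces the right-hand side of \eqref{eq:avgid} to converge to $\pi\{w(\zero)=\sleep\}$, and comparing limits gives $\pi\{w(\zero)=\sleep\}=\zsta$.

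The main obstacle is to upgrade Conjecture~\ref{c.infinitevolume} from convergence along each fixed exhaustion to the uniform statement used above: for every $\eps>0$ there exists $r$ such that $|\pi_V\{w(\zero)=\sleep\}-\pi\{w(\zero)=\sleep\}|<\eps$ for every admissible $V\supset B_r(\zero)$. A soft compactness argument---extracting a subsequence of would-be counterexamples $V_k$ and splicing them into an exhaustion that violates Conjecture~\ref{c.infinitevolume}---should suffice if one restricts to a reasonable class of shapes (e.g.\ boxes with $\#\partial V/\#V\to 0$). A more robust route, and probably the genuine difficulty here, would be a quantitative spatial mixing estimate showing that the marginal $\pi_V|_{\{\zero\}}$ is insensitive to the shape of $V$ far from $\zero$; such an estimate is essentially a quantitative form of Question~\ref{q.measures} and is in keeping with the bulk-versus-boundary theme of Section~\ref{s:wiredchain}.
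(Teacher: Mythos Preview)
The paper does not prove this statement: Conjecture~\ref{c.bulkdensity} is stated as an open conjecture, with no accompanying proof or heuristic derivation. There is therefore no ``paper's own proof'' against which to compare your proposal.

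What you have written is a conditional argument showing that Conjecture~\ref{c.bulkdensity} would follow from Conjectures~\ref{c.stationary} and~\ref{c.infinitevolume} together with a mild strengthening of the latter. The translation-equivariance identity and the averaging step~\eqref{eq:avgid} are correct, and the bounded-convergence passage on the left-hand side is fine. Your compactness sketch for upgrading Conjecture~\ref{c.infinitevolume} to a uniform statement is also sound: if $\pi_{V}|_{\{\zero\}}\to\pi|_{\{\zero\}}$ along \emph{every} exhaustion with the same limit, then a failure of uniformity would produce a sequence $U_k\supset B_{r_k}$, $r_k\to\infty$, with $\pi_{U_k}\{w(\zero)=\sleep\}$ bounded away from the limit; passing to a nested subsequence (possible since each $U_k$ is finite and $r_k\to\infty$) yields an exhaustion contradicting Conjecture~\ref{c.infinitevolume}. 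So the obstacle you flag is smaller than you suggest, \emph{provided} Conjecture~\ref{c.infinitevolume} is read as holding along arbitrary exhaustions rather than only, say, boxes.

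The genuine gap is simply that you are assuming two other conjectures. As a reduction (Conjectures~\ref{c.stationary} and~\ref{c.infinitevolume} imply Conjecture~\ref{c.bulkdensity}) your argument is essentially complete; as a proof of Conjecture~\ref{c.bulkdensity} itself it is not, and the paper does not claim one either.
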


\subsection{The hockey stick conjecture}
Write $(w_k)_{k \geq 0}$ for the ARW wired chain on the box $V := [1,L]^d \subset \Z^d$ with initial state $w_0=0$ (all sites are empty) and with \emph{uniform driving}: instead of adding particles at a fixed vertex, we add them at a sequence of independent vertices $v_1, v_2, \ldots$ with the uniform distribution on $V$:
	\[ w_{k+1} = \Stab_V (w_k + \delta_{v_{k+1}}), \qquad k \geq 0. \]

When does the wired chain begin to lose a macroscopic number of particles at the boundary?  A theorem of Rolla and Tournier partially answers this question. Define 
	\[ \zeta_w :=  \inf \Big\{ t>0 \,:\, \limsup_L  \frac{\EE ( |w_{tL^d}| )}{L^d} < t \Big\} \]
where, as usual, $|w_k|$ denotes the number of particles in $w_k$.

\begin{theorem}\label{th:RT}
 \cite[Proposition 3]{RT} $\zeta_w \geq \zeta_c$. 
 \end{theorem}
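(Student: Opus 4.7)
The plan is to show that for every $t < \zeta_c$, $\EE|w_{tL^d}|/L^d \to t$, which gives $\zeta_w \geq \zeta_c$ directly from the definition. Writing $N_L := tL^d - |w_{tL^d}|$ for the total number of particles killed at the boundary during the first $tL^d$ uniform additions, the goal reduces to showing $\EE(N_L) = o(L^d)$.

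First I would reformulate using the abelian property of ARW: $w_{tL^d}$ has the same law as $\Stab_V(\eta)$, where $\eta = \sum_{i=1}^{tL^d}\delta_{v_i}$ places $tL^d$ particles at i.i.d.\ uniform sites of $V = [1,L]^d$, and $N_L$ equals in distribution the sink count of this single wired stabilization. Next, couple $\eta$ with a stationary ergodic configuration $\eta^\infty \colon \Z^d \to \N$ of density $t$ (for instance a Poisson$(t)$ field) so that $\eta(x) \leq \eta^\infty(x)$ for every $x \in V$. Since $t < \zeta_c$, Theorem~\ref{t.RSZ} guarantees that $\eta^\infty$ stabilizes a.s.\ on $\Z^d$. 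The ARW odometer is monotone both in the initial configuration and in the domain: wired killings only reduce activity compared to the free $\Z^d$ evolution, while extra particles anywhere can only increase odometers inside $V$. Combining these monotonicities in the instruction-stack coupling yields the pathwise domination
\[
N_L \;\leq\; F_L \;:=\; \#\{\text{moves from } V \text{ to } V^c \text{ in the } \Z^d\text{-stabilization of } \eta^\infty\}.
\]

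By translation invariance of $\eta^\infty$, $\EE(F_L) = c(t)\,\#\partial V$ for a constant $c(t)\in[0,\infty]$ equal to the expected flux across a single boundary edge. If $c(t)<\infty$, then $\EE(N_L)/L^d \leq c(t)\,\#\partial V/L^d \to 0$ as $L \to \infty$, completing the proof.

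The hard part is the finiteness of $c(t)$: Theorem~\ref{t.RSZ} asserts only a.s.\ finiteness of the odometer, not integrability. To overcome this one revisits the RSZ stabilization scheme to produce, at density strictly below $\zeta_c$, a legal toppling sequence in which each initial particle has a finite-mean ``trajectory budget'', yielding an integrable bound on the edge flux. An alternative route is to truncate $\eta^\infty$ to nested boxes $\til V_n \uparrow \Z^d$: each truncated flux across $\partial V$ is automatically integrable by finiteness, and one then shows that the contribution from particles outside $\til V_n$ vanishes uniformly in $L$ as $n \to \infty$, giving $\EE(F_L) = o(L^d)$ without passing through $c(t)$. This moment-control step is the main obstacle in the argument.
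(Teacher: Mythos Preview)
The paper does not prove this statement; it is quoted from \cite[Proposition 3]{RT} without argument, so there is no in-paper proof to compare against.

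On the proposal itself: the reduction via the abelian property and the monotone coupling with a dominating stationary field is the right skeleton, but two points need attention. First, the coupling is not quite as stated: exact uniform placement of $tL^d$ particles is not pointwise dominated by a Poisson$(t)$ field, since the Poisson total in $V$ falls below $tL^d$ with probability bounded away from $0$; you must take intensity $t'\in(t,\zeta_c)$ and treat the small-probability failure event separately (its contribution to $\EE N_L$ is $o(L^d)$ since $N_L\le tL^d$). Second, and this is the substantive gap you yourself flag: Theorem~\ref{t.RSZ} gives only almost-sure finiteness of the odometer, not integrability of the boundary flux, and neither of your two proposed workarounds is actually executed. Noting that $N_L/L^d\le t$ is bounded, convergence of $N_L/L^d\to 0$ in probability would already suffice by dominated convergence, but even that weaker statement appears to require quantitative control of the flux beyond almost-sure finiteness. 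As written, the argument is an outline that stops precisely at the non-trivial step; it identifies the key lemma but does not supply it.
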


We conjecture $\zeta_w = \zth$, and that the stabilized density has the following simple piecewise linear form.

\begin{conjecture}[Hockey stick] \label{c:hockey} The wired chain on $[1,L]^d$ with uniform driving satisfies
\[ \frac{|w_{tL^d}|}{L^d} \to \begin{cases}  t, & t \leq \zth  \\
				\zeta_c, & t \geq \zth
				\end{cases}  \]
in probability as $L \to \infty$, where $\zth$ is the threshold density of Theorem~\ref{t.RSZ}.
\end{conjecture}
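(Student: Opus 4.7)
I would split into the subcritical regime $t\leq\zth$ and the supercritical regime $t>\zth$. The subcritical case should be provable from the abelian property and Theorem~\ref{t.RSZ}; the supercritical case is where the main obstacle lies. Note that $|w_k|$ is not monotone in $k$, so upper and lower bounds must be argued separately in each regime.

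\textbf{Subcritical case ($t \leq \zth$).} By the abelian property, $w_{tL^d}$ has the same law as $\Stab_V(\xi)$, where $\xi$ places $\floor{tL^d}$ particles at i.i.d.\ uniform sites of $V := [1,L]^d$. Fix $\eps>0$ with $t+\eps<\zth$ and let $\hat\xi$ be an i.i.d.\ Poisson$(t+\eps)$ configuration on $\Z^d$; by concentration, $\xi\leq\hat\xi$ on $V$ with probability tending to $1$. Since $\hat\xi$ is stationary ergodic of density $<\zth$, Theorem~\ref{t.RSZ} yields a finite infinite-volume odometer $u_\infty$ with $\EE[u_\infty(x)]=c(t+\eps)<\infty$, independent of $x$. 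Standard least-action monotonicity of the ARW odometer (adding particles and adding sinks both act monotonically on the odometer in opposite directions) gives $u_V(x)\leq u_\infty(x)$ for $x\in V$ in the coupled construction. Since each escape is charged to a step from the inner boundary,
\[ \EE[\#\text{escapes}] \leq C \sum_{x\in\partial V}\EE[u_V(x)] \leq C\,c(t+\eps)\,|\partial V| = O(L^{d-1}), \]
and Markov's inequality yields $|w_{tL^d}|/L^d \to t$ in probability for $t < \zth$. The endpoint $t = \zth$ follows by taking $t' \uparrow \zth$ and using the trivial bound $|w_{tL^d}|\leq tL^d$.

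\textbf{Supercritical case ($t>\zth$) and main obstacle.} For $t>\zth$ both bounds on $|w_{tL^d}|/L^d$ are delicate. For the upper bound $|w_{tL^d}|/L^d\leq\zth+\eps$, I would argue that when the current density exceeds $\zth$, the next particle addition causes at least one escape on average, because the corresponding infinite-volume configuration fails to stabilize by Theorem~\ref{t.RSZ}; iterating this drift over $O(L^d)$ steps should pin the density at $\zth$. For the lower bound $|w_{tL^d}|/L^d\geq\zth-\eps$, I would combine the subcritical conclusion at $t'=\zth-\eps/2$ with the fact that when $|w_k|/L^d<\zth$, the increments of the escape count $N_k:=k-|w_k|$ have conditional mean strictly less than $1$ (again by comparison with subcritical infinite-volume ARW, as in the subcritical argument above), so $N_k$ cannot outrun $k-\zth L^d$ before time $tL^d$. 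The hard step is converting the \emph{qualitative} RSZ dichotomy into a \emph{quantitative} drift estimate on escapes as a function of local density; Theorem~\ref{t.RSZ} alone gives no such rate. A plausible route is to refine the Diaconis-Fulton / least-action machinery to produce quantitative escape bounds for configurations of density $>\zth$, or alternatively to show that $(w_k)$ mixes in time $O(L^d)$ and combine this with the stationary density identification $\zsta=\zth$ (Conjectures~\ref{c.stationary},\ref{c.densities}), reducing the supercritical case to a mixing statement. Either approach requires tools not yet in the literature, which is why the hockey-stick conjecture remains open.
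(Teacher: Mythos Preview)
The statement you were asked to prove is labeled a \emph{Conjecture} in the paper and is not proved there; there is no ``paper's own proof'' to compare against. What the paper does prove is the conditional Proposition~\ref{p:hockey} (that Conjecture~\ref{c:hockey} implies $\zeta_w=\zth$), and it cites Rolla--Tournier (Theorem~\ref{th:RT}) for the one-sided inequality $\zeta_w\ge\zth$, which is the expectation-level cousin of your subcritical bound. You correctly recognize in your closing sentence that the full hockey-stick statement remains open.

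Two comments on the strategy itself. First, in the subcritical case you assert that Theorem~\ref{t.RSZ} yields $\EE[u_\infty(x)]<\infty$; but that theorem, as stated, only gives $u_\infty(x)<\infty$ almost surely. Finiteness of the \emph{expected} odometer at subcritical density is an additional, nontrivial input not contained in the paper, and your $O(L^{d-1})$ escape bound collapses without it. (The Rolla--Tournier argument behind Theorem~\ref{th:RT} does supply something of this flavor, so citing that paper directly would be the honest move here.) Second, your diagnosis of the supercritical obstacle---that one needs either a quantitative drift estimate going beyond the qualitative RSZ dichotomy, or $O(L^d)$ mixing combined with the identification $\zsta=\zth$---is accurate and matches the paper's own framing: these are precisely Conjectures~\ref{c.stationary}, \ref{c.densities}, and~\ref{c.cutoff}, all stated as open.
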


The name for this conjecture comes from the graph of the piecewise linear limit, which has the shape of a hockey stick (Figure~\ref{f.hockey}).

\begin{figure}[h!]
\includegraphics[width=.85\textwidth]{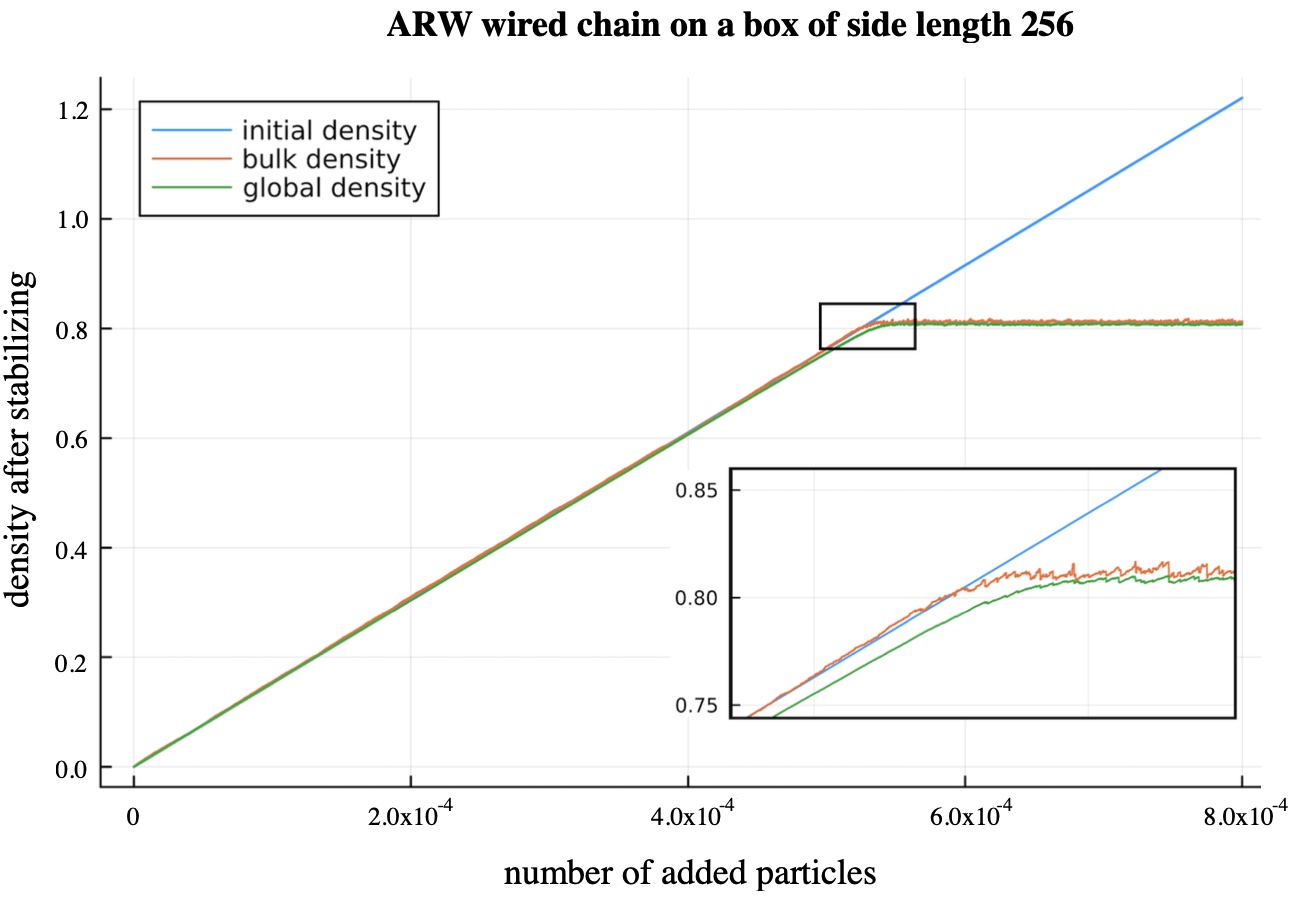}
\caption{The hockey stick: As particles are added, the density of the ARW wired chain increases to $\zeta_c$ and then flatlines. Here $V \subset \Z^2$ is a box of side length $L=256$, the sleep rate is $\lambda=2$, and $\zeta_c \approx 0.813$. ``Global density'' is the total number of particles divided by $L^2$. ``Bulk density'' is the number of particles in a central window of side length $L/2$, divided by $(L/2)^2$.
\label{f.hockey}
}
\end{figure}

\begin{prop}\label{p:hockey}
If Conjecture \ref{c:hockey} holds, then $ \zeta_w = \zth $. 
\end{prop}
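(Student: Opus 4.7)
The plan is to derive the upper bound $\zeta_w \leq \zth$ directly from Conjecture \ref{c:hockey} and combine it with Theorem \ref{th:RT} to obtain the claimed equality. The argument is essentially a one-line consequence of the conjecture once one exploits the trivial boundedness of the chain.

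First, I would record the deterministic bound
\[ 0 \leq \frac{|w_{tL^d}|}{L^d} \leq 1 \qquad \text{almost surely,} \]
which holds because $w_k$ takes values in $\{0,\sleep\}^V$, so each of the $L^d$ sites of $V = [1,L]^d$ carries at most one sleeping particle.

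Next, I would apply Conjecture \ref{c:hockey}, which gives, for every $t>0$,
\[ \frac{|w_{tL^d}|}{L^d} \xrightarrow{\;\PP\;} \min(t,\zth) \qquad \text{as } L \to \infty. \]
Since the random variables $|w_{tL^d}|/L^d$ are uniformly bounded in $[0,1]$, the bounded convergence theorem upgrades this to convergence in $L^1$, and in particular
\[ \lim_{L \to \infty} \frac{\EE\bigl( |w_{tL^d}| \bigr)}{L^d} = \min(t,\zth). \]

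Finally, for any $t > \zth$ the right-hand side equals $\zth$, which is strictly less than $t$, so such a $t$ belongs to the set $\{s > 0 : \limsup_L \EE(|w_{sL^d}|)/L^d < s\}$. Taking the infimum over $t > \zth$ gives $\zeta_w \leq \zth$, and combining with the lower bound $\zeta_w \geq \zth$ supplied by Theorem \ref{th:RT} yields $\zeta_w = \zth$.

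There is no substantive obstacle: the only subtlety is the passage from convergence in probability (which is what Conjecture \ref{c:hockey} provides) to convergence of expectations (which is what the definition of $\zeta_w$ requires), and this is handled cleanly by the deterministic cap $|w_k| \leq L^d$ coming from the single-occupancy state space.
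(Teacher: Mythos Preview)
Your proof is correct and somewhat cleaner than the paper's. Both arguments reduce to showing $\zeta_w \leq \zth$ (the reverse inequality being Theorem~\ref{th:RT}), and both must bridge the gap between convergence in probability (as in Conjecture~\ref{c:hockey}) and the statement about expectations defining $\zeta_w$. The paper argues by contradiction: assuming $\zeta_w > \zth$, it picks $\zeta \in (\zth,\zeta_w)$, uses the bound $|w_{\zeta L^d}| \leq \zeta L^d$ (at most as many particles as have been added) to extract a subsequence with $\EE(|w_{\zeta L^d}|)/L^d \to \zeta$, and then applies a Markov-type inequality to force convergence in probability to $\zeta$, contradicting the hockey stick. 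You instead go directly: the deterministic cap $|w_k|\leq L^d$ plus convergence in probability to a constant gives convergence of expectations (uniform integrability is automatic from boundedness), so $\limsup_L \EE(|w_{tL^d}|)/L^d = \zth < t$ for every $t>\zth$, and $\zeta_w \leq \zth$ follows at once. Your route avoids the contradiction scaffolding and the subsequence extraction; the paper's route, on the other hand, makes explicit the weaker bound $|w_k|\leq k$ (rather than $|w_k|\leq \#V$), which is all that is really needed.
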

\begin{proof}
By Theorem \ref{th:RT}, it suffices to prove that $ \zeta_w \leq \zth$. We argue by contradiction: suppose that $\zth < \zeta_w$. Then there is a $\zeta \in ( \zth , \zeta_w )$ such that both 
	\begin{equation}\label{eq1}
	\limsup_L \frac{\EE ( |w_{\zeta L^d}| )}{L^d} \geq  \zeta 
	\end{equation}
since $\zeta < \zeta_w$, and for any $\eps >0$ 
	\begin{equation}\label{eq2}
	\mathbb{P} \bigg( \bigg| \frac{ |w_{\zeta L^d}| }{L^d} - \zth \bigg| \geq \eps \bigg) \to 0 \qquad \mbox{ as } L \to\infty ,  
	\end{equation}
since $\zeta > \zth $ and Conjecture \ref{c:hockey} holds true. But by \eqref{eq1} there exists a diverging subsequence $(L_k)_{k\geq 1}$ such that 
	\[ \lim_{k\to \infty } \frac{\EE ( |w_{\zeta L_k^d}| )}{L_k^d} \geq  \zeta , \]
and since each term in this sequence is at most $\zeta $ by construction, it must be that the above limit actually equals $\zeta$. It follows that for any $\eps >0$ 
	\[ \lim_{k\to\infty } \mathbb{P} \bigg( \bigg| \frac{ |w_{\zeta L_k^d}| }{L_k^d} - \zeta \bigg| \geq \eps \bigg) 
	= \lim_{k\to\infty } \mathbb{P} \bigg( \zeta - \frac{ |w_{\zeta L_k^d}| }{L_k^d}  \geq \eps \bigg) 
	\leq \lim_{k\to\infty } \frac 1 \eps \bigg( \zeta - \frac{\EE ( |w_{\zeta L_k^d}| )}{L_k^d}
	\bigg) = 0 .  \]
This contradicts \eqref{eq2} by uniqueness of the limit in probability. 
\end{proof}

\subsection{Fast mixing}
\label{s.mixing}

Consider the ARW wired chain in a discrete Euclidean ball $V = \{x \in \Z^d \,:\, \sum x_i^2 < L^2 \}$ with uniform driving.  We highlight the following conjecture from \cite{LL} to the effect that this chain mixes immediately after reaching the stationary density $\zsta$. 

\begin{conjecture} \label{c.cutoff} \moniker{Cutoff, \cite{LL}}
The ARW wired chain has cutoff in total variation at the time
	\[ t_{mix} = \zsta \# V. \]
\end{conjecture}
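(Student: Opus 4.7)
The plan is to establish Conjecture~\ref{c.cutoff} via matching lower and upper bounds for the total-variation distance at the candidate cutoff time $t_{\rm mix} = \zsta \#V$.

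\textbf{Lower bound.} I would use the total particle count $|w_k|$ as a one-dimensional distinguishing statistic. By the hockey stick (Conjecture~\ref{c:hockey}, whose natural adaptation to uniform driving on a ball is presumed here), for any $\eps \in (0,1)$,
\[ \frac{|w_{(1-\eps)\zsta \#V}|}{\#V} \longrightarrow (1-\eps)\zsta \quad\text{in probability,} \]
whereas under the stationary law $\pi_V$ one has $|\Stab_V(\one_V)|/\#V \to \zsta$ in probability by Conjecture~\ref{c.stationary}. The gap between these two limits is of order $\#V$ and dwarfs any $O(\sqrt{\#V})$ fluctuations, so the event $\{|w| > (1-\eps/2)\,\zsta \#V\}$ has probability tending to $1$ under $\pi_V$ and to $0$ under the law of $w_{(1-\eps)t_{\rm mix}}$, yielding $d_{TV}(w_{(1-\eps)t_{\rm mix}}, \pi_V) \to 1$.

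\textbf{Upper bound via abelian coupling.} The idea is to exploit abelianness together with the exact sampler from Section~\ref{s:wiredchain}. Couple the chain $(w_k)$ started from empty with a second chain $(w'_k)$ started from $w'_0 := \Stab_V(\one_V) \sim \pi_V$, using the same driving sequence $v_1, v_2, \dots$ and the same random-walk instructions for every particle. Writing $\xi_k := \sum_{i \leq k} \delta_{v_i}$, the abelian property gives
\[ w_k = \Stab_V(\xi_k), \qquad w'_k = \Stab_V(\xi_k + \one_V) = \Stab_V(w_k + \one_V). \]
Since $w'_k \sim \pi_V$ for every $k$, one has $d_{TV}(w_k, \pi_V) \leq \PP(w_k \neq w'_k)$, so it suffices to show that for $k = (1+\eps)\zsta \#V$, adding one active particle at every site of the already near-stationary configuration $w_k$ and restabilizing perturbs only $o(\#V)$ sites on average. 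The inputs available are: the total mass is already at the stationary value by the hockey stick, so the extra $\#V$ injected particles must either be absorbed into vacancies of $w_k$ or exit through $\partial V$, with only local rearrangement in between.

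\textbf{Main obstacle.} The hard core of the argument is quantifying the range of influence of a single added active particle on a stable near-stationary configuration. What is needed is an estimate of the form
\[ \EE\bigl[\#\{x \in V : \Stab_V(w+\delta_v)(x) \neq w(x)\}\bigr] = o(\#V / \#V) = o(1) \cdot \text{(acceptable range)} \]
for $w$ distributed near $\pi_V$, which, summed over $\#V$ injections, leaves an $o(\#V)$ symmetric difference between $w_k$ and $w'_k$. In the subcritical regime such a bound follows from exponential decay of activity; \emph{exactly} at the critical density $\zsta$, however, one expects only power-law correlations, and the bound becomes delicate. It is precisely this quantitative spatial-mixing statement at criticality---intimately tied to the hyperuniformity conjectures in Section~\ref{s:hyperuniformity} and to the bulk-density Conjecture~\ref{c.bulkdensity}---that I expect to be the crux, and what distinguishes cutoff for ARW from cutoff for less finely tuned driven abelian chains. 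The lower bound, by contrast, should fall out immediately once the hockey stick is available.
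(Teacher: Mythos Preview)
The statement you are attempting to prove is a \emph{conjecture} in the paper, not a theorem: the paper offers no proof, and indeed the conjecture is open. What the paper records are strictly weaker partial results from the literature: an upper bound $t_{\rm mix} \leq (1+o(1))\#V$ from \cite{LL} (note the constant is $1$, not $\zsta$) obtained by a coupling with Internal DLA, and separation cutoff at time $\#V$ (again, not $\zsta\#V$) from \cite{BS}. There is therefore no ``paper's own proof'' to compare your proposal against.

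Your proposal is not a proof either, and you correctly identify why. The lower bound you sketch is conditional on two other open conjectures in the same paper (the hockey stick, Conjecture~\ref{c:hockey}, and the existence of the stationary density, Conjecture~\ref{c.stationary}), so even granting your argument it would only establish an implication between conjectures, not the result itself. For the upper bound, the gap you flag in the ``Main obstacle'' paragraph is exactly the missing ingredient: controlling the range of influence of added particles on a configuration at the critical density $\zsta$ is the heart of the problem, and no one currently knows how to do it. The known upper bound from \cite{LL} sidesteps this by comparing to Internal DLA rather than by the abelian self-coupling you propose, and that comparison only yields the constant $1$ rather than $\zsta$. Improving the constant to $\zsta$ is precisely what remains open.
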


In \cite{LL} it is shown that $t_{mix} \leq (1+o(1))\# V$. The proof uses a coupling between Activated Random Walk and Internal DLA. Bristiel and Salez \cite{BS} show that the relaxation time is much smaller: $O(L^{d-1})$ in dimensions $d \neq 2$ and $O(L \log L)$ in dimension $2$. They also prove separation cutoff at time $\#V$. 

\subsection{Incompressibility}

A recurring challenge in proving several of the above conjectures is to show that ``dense clumps'' are unlikely.  We conjecture that clumps denser than the mean in the infinite-volume stationary state $w  \sim \pi$ have exponentially small probability.
Write $|w|_L := \sum_{x \in [1,L]^d} 1_{\{w(x)=\sleep\}}$ for the number of particles in the cube $[1,L]^d$. 

\begin{conjecture} \label{c.compress} \moniker{Incompressibility}
For each $\zeta > \zsta$, there is a constant $c= c(\zeta,\lambda) > 0$ such that for $w  \sim \pi $
\[ P( |w |_L \geq \zeta L^d ) \leq \exp (-c L^d).  \]
\end{conjecture}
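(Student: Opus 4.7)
The plan is to reduce to a finite-volume uniform bound and then try to extract the exponential decay from a comparison with supercritical ARW. I would first use Conjecture~\ref{c.infinitevolume} to pass to the finite-volume approximations $\pi_V$: it suffices to show that for some $c = c(\zeta,\lambda) > 0$ and every exhaustion $V \supset B := [1,L]^d$,
\[ P\bigl(|w_V|_L \geq \zeta L^d\bigr) \leq e^{-cL^d} \]
for $w_V \sim \pi_V$, uniformly in $V$, and then take $V \uparrow \Z^d$. By the Exact Sampling Proposition, $w_V$ has the distribution of $\Stab_V(\one_V)$, which gives a concrete random object to analyze in terms of the site-wise Diaconis--Fulton instruction arrays.

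Next I would use the abelian property to decompose the stabilization. Stabilize first the particles in $V \setminus B$, treating $\partial B$ as a temporarily absorbing inner boundary; this produces a random influx $\Phi$ onto $\partial B$. The residual stabilization happens inside $B$ starting from $\one_B + \Phi$ with wired boundary on $\partial B$. On the event $\{|w_V|_L \geq \zeta L^d\}$ the residual dynamics leaves at least $\zeta L^d$ sleepers in $B$. Invoking Conjecture~\ref{c.densities}, $\zsta = \zth$, so $\zeta > \zth$; and Theorem~\ref{t.RSZ} says that an infinite i.i.d.\ configuration of density $\zeta$ never stabilizes. Heuristically, then, a finite patch of such density can only stabilize if it sheds a lot of mass through $\partial B$, so an atypically dense packing inside $B$ requires atypically small outflow through $\partial B$. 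This gives the qualitative picture that the event is rare; the task is to quantify ``rare''.

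The main obstacle, and what I expect to be the crux, is converting this qualitative picture into an exponential tail. A natural attempt is Chernoff: exhibit $\theta = \theta(\zeta,\lambda) > 0$ and $\zeta_0 \in (\zsta,\zeta)$ such that
\[ \EE\bigl[e^{\theta |w_V|_L}\bigr] \leq e^{\theta \zeta_0 L^d + o(L^d)}, \]
perhaps via a Girsanov-style tilt of the instruction arrays that keeps the tilted stabilization subcritical. Closing such an argument seems to require either an FKG-type positive association for $\pi_V$ combined with a subadditive pasting of disjoint sub-cubes, or a quantitative strengthening of the Rolla--Sidoravicius--Zindy threshold theorem yielding an exponentially small lower tail for the $\partial B$-odometer. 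Neither is presently available: positive association for $\pi$ is itself a longstanding open problem, and existing threshold arguments are not quantitative at volume scale $L^d$. A conditional statement assuming FKG for $\pi$, or a weaker stretched-exponential bound obtained by coarse-graining into disjoint blocks and summing polynomial tail bounds via a van den Berg--Kesten-type inequality, seems like the most realistic first target; the full exponential bound at rate $L^d$ appears genuinely out of reach of current techniques.
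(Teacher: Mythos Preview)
This statement is a \emph{conjecture} in the paper, not a theorem: there is no proof to compare against. The paper explicitly leaves it open, remarking only that ``the ideas introduced in \cite{AFG,FG} may be useful in proving incompressibility for $\zeta$ sufficiently close to 1.'' Your own closing paragraph is therefore the accurate assessment: the full exponential bound at volume rate $L^d$ is not currently known, and your outlined route---reduce to $\pi_V$ via exact sampling, decompose abelianly across $\partial B$, and try to extract a Chernoff bound from some form of positive association or a quantitative threshold estimate---runs into exactly the obstacles you name. None of FKG for $\pi$, a BK-type inequality for ARW, or a volume-order large-deviation strengthening of Rolla--Sidoravicius--Zindy is available.

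Two smaller points. First, your reduction already invokes Conjecture~\ref{c.infinitevolume} (existence of $\pi$) and Conjecture~\ref{c.densities} ($\zsta=\zth$), so even the setup is conditional on other open problems in the paper; that is fine for a heuristic plan but worth flagging. Second, the one concrete lead the paper offers---the Asselah--Forien--Gaudilli\`ere and Forien--Gaudilli\`ere techniques for the regime $\zeta$ close to $1$---is orthogonal to your FKG/Chernoff strategy and might be worth exploring as a partial result, since it targets the easier end of the range rather than all $\zeta>\zsta$ at once.
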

The ideas introduced in \cite{AFG,FG} may be useful in proving incompressiblity for $\zeta$ sufficiently close to 1.

\section{The free Markov chain} \label{s:freechain}

Fix a finite connected graph $V$, an initial configuration $\phi_0: V \to \{0,\sleep\}$, and let
	\[ \phi_{k+1} = \Stab (\phi_k + \delta_{v_{k+1}}) \]
be the configuration of sleeping particles obtained by adding one active particle at a random vertex $v_k$, and then stabilizing by ARW dynamics in $V$ with sleep rate $\lambda$.  The vertices $v_1, v_2, \ldots$ are independent with the uniform distribution on $V$.

Unlike the ARW wired Markov chain in Section \ref{s:wiredchain}, particles cannot escape $V$. So the total number of particles is deterministic: $|\phi_k| = |\phi_0| +k$.  As long as this number does not exceed $ \#V$,  stabilization happens in finite time, but if the number of particles is large then it could take a long time (even exponentially long, \cite{expo})!  We will define the \emph{threshold time} as the first time $k$ such that $\phi_k$ takes ``too long'' to stabilize. 

Let $(\phi_k)_{k\geq 0}$ denote the ARW free chain on $V$ initiated from the empty configuration $\phi_0 = 0$. 
For $k\geq 0$, denote by  $U_k$ be the total number of random walk steps needed to stabilize $\phi_k + \delta_{v_{k+1}}$. 
For any function $f: \N \to \R$, let 
	\[ \tau_f(V) = \inf \{k \geq 0 \,:\, U_k \geq f( \# V ) \}. \]

\begin{conjecture} \moniker{Concentration of the threshold time} \label{c.freethreshold}
Let $V = \Z_L^d $ be the $d$-dimensional torus of side-length $L$. 
There exists a superlinear function $f:\N \to \R$ such that, as $L \uparrow \infty$, 
	\[ \lim_{L \uparrow \infty}  \frac{\tau_{f} (\Z_L^d)}{L^d} = \zth \]
in probability, 
where $\zth$ is the threshold density of Section \ref{s:stationaryergodic}.
\end{conjecture}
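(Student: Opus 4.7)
The plan is to use the abelian property to rewrite $\tau_f$ in terms of the total torus odometer, and then separately attack the two sides of the convergence in probability. Let $T_L(\eta)$ denote the number of walk steps to stabilize $\eta$ on $\Z_L^d$, and let $\sigma_k := \sum_{i=1}^k \delta_{v_i}$ be the superposition of the first $k$ driving sources. By abelianness, $U_k = T_L(\sigma_{k+1}) - T_L(\sigma_k)$, so the cumulative sum $\sum_{j=0}^{k-1} U_j = T_L(\sigma_k)$ equals the odometer from a single bulk stabilization; after Poissonization, $\sigma_k$ becomes a Poisson($\zeta$) product field on $\Z_L^d$ with $\zeta = k/L^d$. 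The proof reduces to controlling the map $\zeta \mapsto T_L(\sigma_{\zeta L^d})$.

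For the upper bound $\tau_f(\Z_L^d) \geq (\zth - \eps) L^d$ with high probability, I would fix $\zeta = \zth - \eps$ and establish $T_L(\sigma_{\zeta L^d}) = O_P(L^d)$. This is a quantitative form of Theorem~\ref{t.RSZ}, namely finiteness of the expected per-site odometer at subcritical density. The natural route is to couple $\ARW$ on $\Z_L^d$ to an infinite-volume $\ARW$ process on $\Z^d$ (via the covering map $\Z^d \to \Z_L^d$, or by comparison with a Bernoulli field on a large box containing $\Z_L^d$) and then invoke abelian monotonicity together with a finite-expected-odometer bound in infinite volume. Given $T_L = O_P(L^d)$, we have $\max_{k \leq (\zth - \eps)L^d} U_k \leq T_L$, which is dominated by any superlinear $f$.

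For the lower bound $\tau_f(\Z_L^d) \leq (\zth + \eps) L^d$ with high probability, I would show that $T_L(\sigma_{(\zth+\eps)L^d})/L^d \to \infty$ at a rate outpacing $f(L^d)/L^d$. Divergence itself follows from contradiction: if $T_L$ stayed $O(L^d)$ along a subsequence, one could extract a translation-invariant subsequential limit that stabilizes a supercritical configuration on $\Z^d$, contradicting Theorem~\ref{t.RSZ}. Since the cumulative $T_L \gg L^d$, at least one $U_k$ with $k \leq (\zth + \eps) L^d$ must exceed the average $T_L/((\zth + \eps) L^d)$, which diverges; if this divergence is $\gg f(L^d)/L^d$, we obtain $U_k \geq f(L^d)$ and hence $\tau_f \leq (\zth+\eps)L^d$.

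The main obstacle is squeezing $f$ into a common superlinear window between the two bounds. The upper bound demands a quantitative form of the Rolla--Sidoravicius--Zindy theorem, namely finiteness of the expected infinite-volume odometer at density $\zth - \eps$ for every $\eps > 0$, with a sufficiently explicit tail estimate to promote the $L^1$ bound to $O_P(L^d)$; this is essentially open in general dimension, with partial results available only for large $\lambda$ or in specific regimes (Stauffer--Taggi, Asselah--Forien--Gaudilli\`ere, Hoffman--Johnson--Junge and successors). The lower bound requires a quantitative supercritical divergence rate for the torus odometer which, though morally cleaner, is also open. Both halves ultimately reduce to a finer understanding of odometer statistics in the vicinity of $\zth$, which is itself a central open problem in the subject.
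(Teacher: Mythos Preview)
The statement you are attempting to prove is labeled a \emph{Conjecture} in the paper, and the paper offers no proof or proof sketch for it; there is nothing to compare your proposal against. The conjecture is presented as an open problem, immediately followed by the even stronger open Question~\ref{q.slowfast} about a linear-to-exponential dichotomy for $U_{tL^d}$.

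Your outline is a sensible heuristic reduction, and you are candid that its two halves each rest on open problems. A few remarks on the outline itself. First, your labels are swapped: the event $\tau_f \geq (\zth-\eps)L^d$ is a \emph{lower} bound on $\tau_f$, and $\tau_f \leq (\zth+\eps)L^d$ is an \emph{upper} bound. Second, the abelian identity $U_k = T_L(\sigma_{k+1}) - T_L(\sigma_k)$ is correct in the site-wise representation (shared instruction stacks), which is worth making explicit since without it the two sides are merely equal in law. Third, the pigeonhole step on the supercritical side gives only $\max_{k \le (\zth+\eps)L^d} U_k \ge T_L(\sigma_{(\zth+\eps)L^d})/((\zth+\eps)L^d)$, so you need $T_L(\sigma_{(\zth+\eps)L^d}) \gg L^d f(L^d)$ rather than merely $\gg f(L^d)$; this is a genuine extra factor of $L^d$. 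Finally, and most seriously, the conjecture demands a \emph{single} superlinear $f$ that works simultaneously for every $\eps>0$. Your subcritical side is fine with any superlinear $f$ once $T_L = O_P(L^d)$ is known, but on the supercritical side the divergence rate of $T_L(\sigma_{(\zth+\eps)L^d})/L^d$ may degrade as $\eps \downarrow 0$, so one must argue that it still beats some fixed superlinear $f$ uniformly in $\eps$. You allude to this (``squeezing $f$ into a common superlinear window''), but it deserves emphasis: even granting qualitative divergence for each fixed $\eps$, the existence of a common $f$ is not automatic.

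In short: there is no proof in the paper to grade against, your reduction is the natural one, and you have correctly diagnosed that a proof would require quantitative odometer control on both sides of $\zth$ that is currently unavailable.
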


A stronger formulation would posit a sharp transition from linear to exponential time:

\begin{question} \label{q.slowfast}
Is it true that
	\[ U_{tL^d} = \begin{cases} O(L^d),  &t<\zeta_c  \\
						 \exp(\Omega(L^d)),   &t>\zeta_c  ?
				\end{cases}
	\] 
\end{question}

\section{The Wake Markov Chain}
\label{s:wakechain}

Fix a finite connected graph $V$ and an initial configuration $\varphi_0: V \to \{0,\sleep\}$ with $|\varphi_0 | \leq \# V $.  Let $\Wake$ denote the operator that acts on stable particle configurations on $V$ by waking all particles up. 
The ARW wake Markov chain, supported on stable particle configurations on $V$, is defined by 
	\[ \varphi_{k+1}= \Stab ( \Wake ( \varphi_k ) ) .  \]
So in one time step of the wake chain, we wake all particles up and then stabilize. 
Note that stabilization is always possible, though it may take a long time, since $| \varphi_k | = |\varphi_0| \leq \# V $ for all $k\geq 0$.  

\subsection{Stationary measure} 
Take $V = \Z_L^d $ and let $\nu_{L,\zeta}$ denote the stationary measure of the ARW wake chain on $V$. Denote further by    $\tilde{\nu}_{L,\zeta}$ the law of the ARW free chain $(\phi_k)_{k\geq 0}$ at time $k = \zeta L^d $. Note that sleepers configurations drawn according to $\nu_{L,\zeta}$ and $\tilde{\nu}_{L,\zeta}$ have the same density $\zeta$ (in fact, the same number of particles). It is natural to conjecture that in the supercritical regime these measures are close.
\begin{conjecture} \label{c.freewake}
If $\zeta > \zeta_c$ then $d_{\textrm{TV}} \big( \nu_{L,\zeta} , \tilde{\nu}_{L,\zeta} \big) = o(1)$ as $L \to \infty$, where $d_{\textrm{TV}}$ denotes the Total Variation distance. 
\end{conjecture}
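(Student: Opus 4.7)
The plan is to exploit the abelian property of ARW to express both $\nu_{L,\zeta}$ and $\tilde{\nu}_{L,\zeta}$ as the law of $\Stab_V$ applied to a configuration of $N = \zeta L^d$ active particles with different initial placements, and then argue that in the supercritical regime the stabilization law is insensitive to these placements.

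First I would note that $\tilde{\nu}_{L,\zeta}$ equals the law of $\Stab_V\bigl(\sum_{i=1}^N \delta_{v_i}\bigr)$ with $v_1,\dots,v_N$ iid uniform on $V = \Z_L^d$, because by abelianness one may stabilize only once after all $N$ additions have been made. Similarly, $\nu_{L,\zeta}$ equals the law of $\Stab_V(\Wake(\varphi))$ with $\varphi\sim\nu_{L,\zeta}$, which is the stabilization of $N$ active particles placed at the sleeper sites of a $\nu_{L,\zeta}$-sample. Thus both measures are pushforwards of $\Stab_V$ from $N$-active-particle starting configurations, differing only in the joint law of the $N$ starting positions. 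It then suffices to show that, for $\zeta>\zeta_c$, the law of $\Stab_V(\eta)$ is close in total variation for any two distributions on $\eta$ supported on $N$-active-particle configurations (satisfying mild regularity like a uniform bound on local density).

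To prove this last statement I would construct a coupling via the Diaconis--Fulton instruction-stack representation: fix a common field of walk/sleep instructions at each site of $V$ and use it to stabilize both starting configurations. The heuristic reason the resulting odometers should agree is that supercriticality forces activity to persist for a very long time, conjecturally $\exp(\Omega(L^d))$ by Question~\ref{q.slowfast}, which vastly exceeds the simple random walk mixing time $O(L^2\log L)$ on $\Z_L^d$. One would try to decompose the stabilization into an initial ``mixing phase'' during which every bulk site is toppled many times and the particle distribution relaxes to a nearly translation-invariant quasi-stationary state, followed by a ``settling phase'' during which the common instruction field dictates a near-identical final sleeper configuration for the two coupled dynamics. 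An incompressibility-type bound in the spirit of Conjecture~\ref{c.compress} would be used to rule out the formation of differing local clumps.

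The main obstacle is making the mixing-before-stabilization step quantitative. Two inputs are needed and both seem to lie just beyond current techniques: first, a lower bound of the form $U_{\zeta L^d}=\exp(\Omega(L^d))$ on the total number of topplings, which is essentially the supercritical half of Question~\ref{q.slowfast}; and second, a quantitative loss-of-memory statement saying that, once activity has persisted past the random walk mixing time, the marginal law of the configuration on any fixed window decouples from the initial placement. Because the particles interact through the sleep/wake mechanism, they are not independent random walkers, so standard mixing bounds do not apply directly; one would likely need to combine the abelian property with a multi-scale argument that replaces short-time bursts of local activity by effective independent random walk steps. Resolving these points appears to require the same circle of ideas (incompressibility, quasi-stationary behavior of supercritical ARW, and universality of the stabilized measure) that underlie Conjectures~\ref{c.subcritical} and~\ref{c.compress}.
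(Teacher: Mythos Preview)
Your proposal is aligned with the paper's treatment. Note that the statement is a \emph{conjecture}, not a theorem; the paper does not prove it but simply remarks that it would follow from the more general Conjecture~\ref{c.dense} (``dense stabilized configurations are hard to distinguish''). Your reduction is exactly this: you observe, via the abelian property, that both $\tilde{\nu}_{L,\zeta}$ and $\nu_{L,\zeta}$ are laws of $\Stab_V$ applied to $N=\zeta L^d$ active particles with different initial placements, and hence that it suffices to show the stabilization law is insensitive to the placement when $\zeta>\zeta_c$---which is precisely Conjecture~\ref{c.dense}.

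Where you go beyond the paper is in sketching a route toward Conjecture~\ref{c.dense} itself: a common-instruction-stack coupling, exponential activity time from the supercritical half of Question~\ref{q.slowfast}, and a mixing-before-settling decomposition. The paper's own remark here is similar in spirit but slightly different in mechanism: it points out that for sufficiently small $\lambda$, Conjecture~\ref{c.dense} is actually provable by coupling the trajectories of corresponding particles so that each pair meets before stabilization (using the exponential-time results of \cite{expo,FG}), after which the two systems coalesce. Your honest acknowledgment that the quantitative loss-of-memory step ``lies just beyond current techniques'' for general $\lambda$ is accurate and consistent with the status of Conjecture~\ref{c.dense} in the paper. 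One small comment: your parenthetical ``mild regularity like a uniform bound on local density'' is unnecessary---Conjecture~\ref{c.dense} is stated for \emph{arbitrary} pairs of configurations with the same particle count, and that uniformity is what makes the averaging over the random initial placements immediate.
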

This would follow from the following, more general conjecture. 
\begin{conjecture} \moniker{Dense stabilized configurations are hard to distinguish} \label{c.dense}
Fix the dimension $d$ and sleep rate $\lambda$, and let $\zeta_c$ be the threshold density of Theorem~\ref{t.RSZ}.
For each $\zeta > \zeta_c$ and $\eps>0$ there is an $L_0$ such that for all $L \geq L_0$ and any two configurations $\eta, \tilde{\eta}$ of active particles on the discrete torus $\Z_L^d$ with $|\eta| = |\tilde{\eta}| \geq \zeta L^d$, their stabilizations $\Stab(\eta), \Stab(\tilde{\eta})$ satisfy
	\[ d_{\mathrm{TV}} (\Stab(\eta), \Stab(\tilde{\eta})) < \epsilon. \] 
\end{conjecture}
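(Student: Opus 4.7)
The plan is to prove Conjecture \ref{c.dense} by exhibiting a single reference probability measure $\mu_{L,\zeta}$ on stable configurations such that $d_{\mathrm{TV}}(\Stab(\eta), \mu_{L,\zeta}) \to 0$ as $L \to \infty$, uniformly in active configurations $\eta$ on $\Z_L^d$ with $|\eta| \geq \zeta L^d$. The conclusion then follows by the triangle inequality. The natural choice is $\mu_{L,\zeta} = \nu_{L,\zeta}$, the stationary measure of the wake chain at particle density $\zeta$, since $\nu_{L,\zeta}$ is exactly the ``dense stabilization'' measure one expects to be universal.

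As a preliminary reduction I would replace $\eta$ by an indicator configuration in $\{0,1\}^{\Z_L^d}$: if some site $x$ carries $k \geq 2$ active particles, the abelian property lets us execute step instructions at $x$ first, spreading the extras to neighboring sites; iterating yields an indicator configuration with the same total mass, and since abelianness is insensitive to the order of instructions, the law of $\Stab(\eta)$ is unchanged. Now couple $\Stab(\eta)$ with one step of the wake chain from stationarity: let $\varphi_\infty \sim \nu_{L,\zeta}$, set $\eta^\ast := \Wake(\varphi_\infty)$, and observe that $\Stab(\eta^\ast) \sim \nu_{L,\zeta}$. Both $\eta$ and $\eta^\ast$ are indicator active configurations with $N := |\eta|$ particles on $\Z_L^d$, so the problem reduces to comparing $\Stab(\eta)$ with $\Stab(\eta^\ast)$ in total variation.

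The heart of the argument would use the abelian stack model: assign each site $x \in \Z_L^d$ the same infinite stack of i.i.d.\ ARW instructions, and stabilize both $\eta$ and $\eta^\ast$ from these shared instructions. One then needs a \emph{propagation of coincidence} lemma stating that the restriction of the final stable configuration to any finite box $B$ depends, with probability $1 - o(1)$, only on the particle counts in a slightly enlarged box around $B$, and not on their detailed placement. Combined with a local equidistribution argument (both $\eta$ and $\eta^\ast$ carry approximately $\zeta |B|$ particles on most translates $B + y$, because their global density is $\zeta$), this would yield $d_{\mathrm{TV}}(\Stab(\eta), \Stab(\eta^\ast)) < \epsilon$ for $L$ large.

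The main obstacle is proving propagation of coincidence. The ARW odometer responds highly nonlinearly to perturbations of the initial data: a single misplaced active particle may either trigger an avalanche that permanently wakes many sleepers, or be absorbed harmlessly, and these outcomes can diverge between the two coupled copies. Controlling the discrepancy seems to require a multi-scale renormalization in the spirit of \cite{RSZ, AFG, FG}, together with a sharp local equilibrium theorem at supercritical density. In particular, the strategy appears entangled with the existence and uniqueness of the infinite-volume limit $\pi$ from Conjecture \ref{c.infinitevolume} and with the equality $\pi = \mu$ of Question \ref{q.measures}, since these supply the common ``target'' local statistics on which the coupled odometers must agree. I expect that any proof of Conjecture \ref{c.dense} will have to establish several of the surrounding conjectures simultaneously.
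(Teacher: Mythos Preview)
This statement is a \emph{conjecture}, and the paper does not claim a proof in the stated generality.  What the paper does offer, in the paragraph immediately following the conjecture, is a short argument valid only for \emph{small enough sleep rate} $\lambda$, and that argument is quite different from your outline.  The paper's mechanism is: by \cite{expo} in dimension $1$ and \cite[Theorem~3]{FG} in all dimensions, when $\lambda$ is small and the density exceeds $\zeta_c$, stabilization on $\Z_L^d$ takes $\exp(\Omega(L^d))$ instructions with high probability.  One then couples the two processes at the level of individual particle \emph{trajectories}: pair each particle in $\eta$ with one in $\tilde\eta$ and couple their random-walk moves so that the paired walks meet (a polynomial-time event on the torus) long before either system stabilizes.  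Once all pairs have merged, the two systems evolve identically and hence $\Stab(\eta)=\Stab(\tilde\eta)$ on that event.  No reference measure, no local-equilibrium lemma, no propagation of coincidence is needed.

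Your route, by contrast, tries to pass through the stationary measure $\nu_{L,\zeta}$ of the wake chain and a shared-stack coupling.  Two genuine gaps stand out.  First, your ``local equidistribution'' step assumes that $\eta$ already carries roughly $\zeta|B|$ particles in most translates of a box $B$; but the conjecture allows $\eta$ to be completely arbitrary (for instance all $|\eta|$ particles piled on a single site), so this fails at time zero and must instead be \emph{produced} by the dynamics---which is exactly the hard part.  Second, the ``propagation of coincidence'' lemma you isolate is, as you yourself note, essentially as strong as the conjecture and appears to be entangled with Conjecture~\ref{c.infinitevolume} and Question~\ref{q.measures}.  So your sketch is not a proof but a restatement of the difficulty.  (A minor point: $\nu_{L,\zeta}$ is supported on configurations with exactly $\lfloor\zeta L^d\rfloor$ particles, whereas $|\eta|$ can be any integer $\geq \zeta L^d$; your reference measure should be $\nu_{L,|\eta|/L^d}$.)

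The upshot: the paper's trajectory-coupling argument buys a clean proof in the regime where exponential stabilization is already known, at the cost of saying nothing for general $\lambda$.  Your programme aims at the full conjecture but would require several of the neighbouring open problems as input; you correctly identify this, so your write-up is honest, but it should be framed as a heuristic discussion rather than a proof proposal.
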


The underlying mechanism here is that the system takes a long time to stabilize. In particular, it is known that for small enough sleep rate, stabilization takes exponentially many steps in $L$ with high probability: This was proved in dimension $1$ by Basu, Ganguly, Hoffman and Richey \cite{expo}, and recently in all dimensions by Forien and Gaudilli\`{e}re \cite[Theorem 3]{FG}. Their result plus a coupling argument proves Conjecture~\ref{c.dense} for $\lambda$ small enough: One can couple the trajectories of any two particles in the ARW systems starting from $\eta$ and $\tilde{\eta}$ so that they will meet prior to stabilization with high probability. Provided all these couplings are successful, the processes stabilize to the same configuration $\Stab(\eta)= \Stab(\tilde{\eta})$.

\begin{figure}
\includegraphics[width=\textwidth]{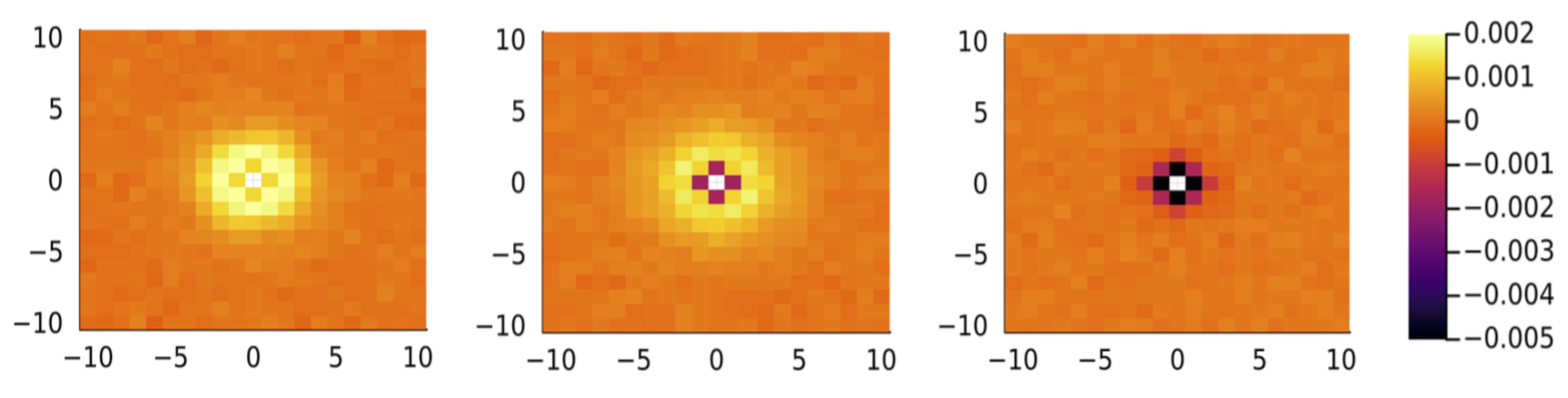}
\caption{Site covariance of the ARW Wake Markov Chain on the torus $\Z_{L} \times \Z_{L}$ with $L=2501$, sleep rate $1$, and subcritical density $0.3$ (much less than $\zeta_c \approx 0.68$). Left to right: covariances after $k=1,2,10$ steps of the wake chain from a uniform random initial condition with $\floor{0.3 L^2}$ particles.
Each non-central site is shaded according to the covariance between the events that a sleeping particle is located at that site and at the central site.
Initial positive correlations with nearby sites become negative after a few time steps.
\label{fig.wake1}
}
\end{figure}

\subsection{Mixing time}
How long does it take for the ARW wake chain to reach stationarity? We conjecture a transition from slow to instantaneous mixing at the threshold density $\zeta_c$.

\begin{conjecture}\label{c.slowfastwake}
Let $\eta$ be any stable configuration on the $d$-dimensional cycle $\mathbb{Z}_L^d$ on $L^d$ vertices, and  denote by $\zeta = |\eta|/L^d $ its particle density. Then the total variation mixing time of the ARW wake chain starting from $\eta$ is $1$ if $\zeta > \zeta_c $, while it is $\Omega (L^2 ) $ if $\zeta < \zeta_c $, where $\zeta_c$ is the threshold density of Theorem~\ref{t.RSZ}. 
\end{conjecture}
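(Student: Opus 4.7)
The proof splits naturally into the supercritical and subcritical regimes. The wake chain preserves the particle count, so we consider its restriction to configurations with exactly $N := \lfloor \zeta L^d \rfloor$ particles, and $\nu_{L,\zeta}$ denotes the stationary measure on that state space.

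For $\zeta > \zeta_c$ the upper bound follows essentially for free from Conjecture \ref{c.dense}. One step of the wake chain from an arbitrary stable $\eta$ with $|\eta| = N$ produces the law $\mathcal{L}(\Stab(\Wake(\eta)))$, and $\Wake(\eta)$ is an active configuration of $N$ particles. Writing the stationary measure as the mixture
\[
\nu_{L,\zeta}(\cdot) = \sum_{\xi} \nu_{L,\zeta}(\xi)\, \mathbb{P}\bigl(\Stab(\Wake(\xi)) \in \cdot\bigr)
\]
over $N$-particle stable $\xi$, and applying Conjecture \ref{c.dense} to each pair $(\Wake(\eta), \Wake(\xi))$, one gets $d_{\mathrm{TV}}(\mathcal{L}(\Stab(\Wake(\eta))), \mathcal{L}(\Stab(\Wake(\xi)))) < \epsilon$ uniformly in $L \geq L_0(\epsilon)$. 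The triangle inequality over the mixture then yields $d_{\mathrm{TV}}(\mathcal{L}(\varphi_1), \nu_{L,\zeta}) < \epsilon$, so the mixing time is $1$ in the $L \to \infty$ asymptotic sense.

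For $\zeta < \zeta_c$, the plan is a distinguishing-statistic lower bound based on the principle that in the subcritical regime particles do not travel far during one wake step. The key lemma to establish is: there exists $C = C(\lambda, \zeta) < \infty$ such that for any stable $\varphi$ on $\Z_L^d$ with $|\varphi| \leq \zeta L^d$, the random walk odometer at any tagged particle in $\Stab(\Wake(\varphi))$ has second moment at most $C$, uniformly in $L$. Granting this, pick two $N$-particle stable configurations $\eta, \eta'$ supported on disjoint thick slabs of $\Z_L^d$ (take slabs of width $L/(4\zeta)$ packed to density $1$, which is possible for any $\zeta < \zeta_c < 1$), and let $f:\Z_L^d \to \R$ be a bounded $1$-Lipschitz function with $|f(x) - f(x')| \gtrsim L$ whenever $x \in \mathrm{supp}(\eta)$, $x' \in \mathrm{supp}(\eta')$. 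Consider the statistic $S(\varphi) := \sum_x f(x)\,1_{\{\varphi(x)=\sleep\}}$. By the abelian property and the odometer lemma, $S(\varphi_k)$ is a sum over tagged particles of single-particle displacements whose per-step increments are square-integrable with uniform constants, so $S(\varphi_k)$ has conditional variance $O(k L^d)$. Since $|S(\eta) - S(\eta')| = \Theta(L^{d+1})$, for $k = o(L^2)$ the laws of $S(\varphi_k)$ started from $\eta$ and $\eta'$ are separated by many standard deviations, forcing $d_{\mathrm{TV}}(\mathcal{L}(\varphi_k^\eta), \mathcal{L}(\varphi_k^{\eta'})) \to 1$; hence the mixing time is $\Omega(L^2)$.

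The principal obstacle is the uniform-in-$L$ second-moment bound on the odometer for an \emph{arbitrary} subcritical stable initial configuration $\varphi$. The infinite-volume subcritical results of \cite{RSZ} yield almost sure finiteness of the odometer for stationary ergodic initial data, but transferring this to adversarial torus configurations calls for a sprinkling or monotonicity argument: one would like to dominate $\Wake(\varphi)$ by the activation of a stationary ergodic subcritical field on $\Z^d$ of slightly higher density, extended periodically to the torus. Ruling out rare bursts of odometer concentrated near heavy-density regions of $\varphi$ is the delicate step; a combination of abelian monotonicity with a martingale or entropy argument is likely needed. Once the odometer is controlled, the Lipschitz/Fourier lower bound itself is routine.
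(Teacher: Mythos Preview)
This statement is a \emph{conjecture} in the paper; the paper offers no proof. Its only remark is that the supercritical part would follow directly from Conjecture~\ref{c.dense}. Your supercritical argument is exactly that deduction, carried out correctly via the mixture representation of $\nu_{L,\zeta}$ and the triangle inequality; this matches the paper.

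For the subcritical lower bound the paper says nothing, so there is nothing to compare against, but your sketch has an internal inconsistency that goes beyond the obstacle you flag. Your key lemma asserts a uniform-in-$L$ second-moment bound on the per-particle odometer for \emph{every} stable $\varphi$ with $|\varphi|\le \zeta L^d$. This is false precisely for the initial states you need. If $\eta$ is a slab of width $\Theta(L)$ packed to density~$1$, then $\Wake(\eta)$ is locally a density-$1$ active configuration; upon stabilization the slab must spread to width $\Theta(L)$ larger, so a positive fraction of the particles are displaced by $\Theta(L)$ and take $\Theta(L^2)$ random-walk steps. Thus the odometer second moment is $\Theta(L^4)$, not $O(1)$, at the very first wake step from $\eta$. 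Consequently $|S(\varphi_1)-S(\varphi_0)|$ can be $\Theta(L^{d+1})$, the same order as the gap $|S(\eta)-S(\eta')|$ you are trying to exploit, and the distinguishing argument collapses at step one.

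A repair would have to decouple the two roles of the initial state: you want $\eta,\eta'$ far apart for the statistic $S$, yet locally subcritical so the odometer lemma has a chance of holding. One option is to take $\eta,\eta'$ at uniform local density $\zeta'\in(\zeta,\zeta_c)$ on complementary halves of the torus (possible whenever $\zeta<\zeta_c$ by choosing $\zeta'$ close to $\zeta_c$); then the separation in $S$ is still $\Theta(L^{d+1})$, and the odometer lemma becomes the more plausible statement that locally subcritical stable configurations have $O(1)$ displacement per wake step. Even so, proving that version of the lemma for adversarial (non-ergodic) torus configurations remains open, as you note.
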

The first part of this conjecture, fast mixing at high density, would follow directly from Conjecture \ref{c.dense}.

\section{Hyperuniformity}
\label{s:hyperuniformity} 
Experiments suggest that the stationary states for the Activated Random Walk Markov chains introduced in Sections \ref{s:wiredchain},\ref{s:freechain},\ref{s:wakechain} above are \emph{hyperuniform}.
\subsection{The wired chain}
 For a random configuration $\eta \in \{ 0 , \sleep \}^{\Z^d}$ with  $\eta  \sim \pi $, write 
$|\eta |_{L} $ for the total number of particles in the cube $[1,L]^d$.

\begin{conjecture} 
\label{c.hyperwired}
Under $\pi$, the variance of $|\eta |_L$ is $O(L^\alpha)$ as $L \to\infty$,  for some $\alpha<d$.
\end{conjecture}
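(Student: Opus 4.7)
The plan is to work with the finite-volume stationary measures $\pi_{V_N}$ on $V_N = [-N,N]^d$ and use the exact sampling representation $\pi_{V_N} = \mathrm{Law}(\Stab_{V_N}(\one_{V_N}))$ from the preceding proposition, proving the variance bound uniformly in $N \gg L$ and then passing to $\pi$ under Conjecture~\ref{c.infinitevolume}. Writing $s_N = \Stab_{V_N}(\one_{V_N})$, the key simplification is that the initial configuration $\one_{V_N}$ is deterministic, so every fluctuation of $|s_N|_L$ is generated by the ARW dynamics itself.

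First, I would write down a clean odometer identity. Let $\mathrm{dep}(x)$ and $\mathrm{arr}(x)$ denote the total numbers of walk-step departures from and arrivals at $x$ during stabilization (sleep instructions move no particles and so cancel from the mass balance). Since each site starts with exactly one active particle and ends with either $0$ or one sleeper, a site-wise count gives $\one_{\{s_N(x)=\sleep\}} = 1 - \mathrm{dep}(x) + \mathrm{arr}(x)$. Summing over the cube $Q_L := [1,L]^d$, internal walk moves telescope and leave
\[
|s_N|_L - L^d = \Phi_N(Q_L),
\]
where $\Phi_N(Q_L)$ is the signed net number of boundary crossings into $Q_L$. The conjecture therefore reduces to showing $\mathrm{Var}(\Phi_N(Q_L)) = O(L^\alpha)$ with $\alpha < d$, uniformly in $N$.

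The second step is to decompose $\Phi_N(Q_L)$ in terms of the walk odometer $u_N(x) := \mathrm{dep}(x)$. Because each step from $x$ selects a neighbor uniformly, the net flow across a boundary edge $(x,y)$ equals $\tfrac{1}{2d}\bigl(u_N(y) - u_N(x)\bigr)$ plus a martingale-type noise $\xi_{(x,y)}$ with conditional variance $O(u_N(x) + u_N(y))$, giving
\[
\Phi_N(Q_L) = \tfrac{1}{2d}\sum_{(x,y) \in \partial Q_L}\bigl(u_N(y) - u_N(x)\bigr) + \sum_{e \in \partial Q_L}\xi_e.
\]
Hyperuniformity will follow if (i) the deterministic part---a discrete outward normal derivative of $u_N$ integrated along $\partial Q_L$---has $L^2$-fluctuations of order $L^{\alpha/2}$, and (ii) the noise $\sum_e \xi_e$ exhibits enough anticorrelation that its variance is $o(L^d)$, despite the naive per-edge bound $\sum_e \mathrm{Var}(\xi_e) = O(L^d)$.

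The main obstacle is that the ARW odometer is qualitatively harder to control than its internal-DLA counterpart: particles may fall asleep and be reawakened, so individual trajectories are not stopped random walks and the joint law of $u_N$ resists direct calculation. A reasonable attack is to use the internal-DLA coupling of \cite{LL} to pin down $u_N$ to leading order, the exponential stabilization-time estimate of Forien--Gaudilli\`ere \cite{FG} to rule out wild local excursions, and combine these in a martingale or Efron--Stein argument to bound $\mathrm{Var}\bigl(u_N(x) - \EE u_N(x)\bigr)$ for $x$ near $\partial Q_L$. As an easier intermediate target, Conjecture~\ref{c.compress} already supplies the upper-tail bound $\PP(|\eta|_L \geq \zeta L^d) \leq e^{-cL^d}$ for $\zeta > \zsta$; the genuinely new ingredient needed is a matching lower-tail estimate ruling out ``underdense'' windows, a quantitative form of the ``no voids'' intuition that has so far not been proved in dimension $d \geq 2$.
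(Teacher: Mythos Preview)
The paper does not prove this statement: it is stated as an open conjecture, supported only by numerical evidence and a remark that Burdzy's proof of hyperuniformity for the Meteor Model might be adaptable, the obstacle being the correlated driving caused by active particles waking sleepers. There is therefore no ``paper's own proof'' to compare against.

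Your proposal is, as you yourself frame it, a plan of attack rather than a proof, and the gaps you flag (control of the ARW odometer, a lower-tail/no-voids estimate) are real and currently open. One additional difficulty that your write-up glosses over: in the exact-sampling representation $s_N = \Stab_{V_N}(\one_{V_N})$ with $N \gg L$, the walk odometer $u_N(x)$ at a site $x$ deep in the bulk of $V_N$ (in particular near $\partial Q_L$) diverges with $N$, since every particle that eventually exits $V_N$ must cross through the bulk many times. Consequently the conditional variance $O(u_N(x)+u_N(y))$ of each edge noise $\xi_e$ is not bounded uniformly in $N$, and the naive bound $\sum_{e\in\partial Q_L}\Var(\xi_e)$ is not $O(L^d)$ but rather blows up as $N\to\infty$. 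Any argument along these lines would have to extract a very strong cancellation among the $\xi_e$ (or between the drift and noise terms) \emph{before} sending $N\to\infty$, which is substantially harder than what you describe. The mass-balance identity $|s_N|_L - L^d = \Phi_N(Q_L)$ is correct and is indeed the natural starting point, but the subsequent decomposition does not by itself reduce the problem to anything tractable with current tools.
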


\begin{figure}[ht]
\includegraphics[width=.88\textwidth]{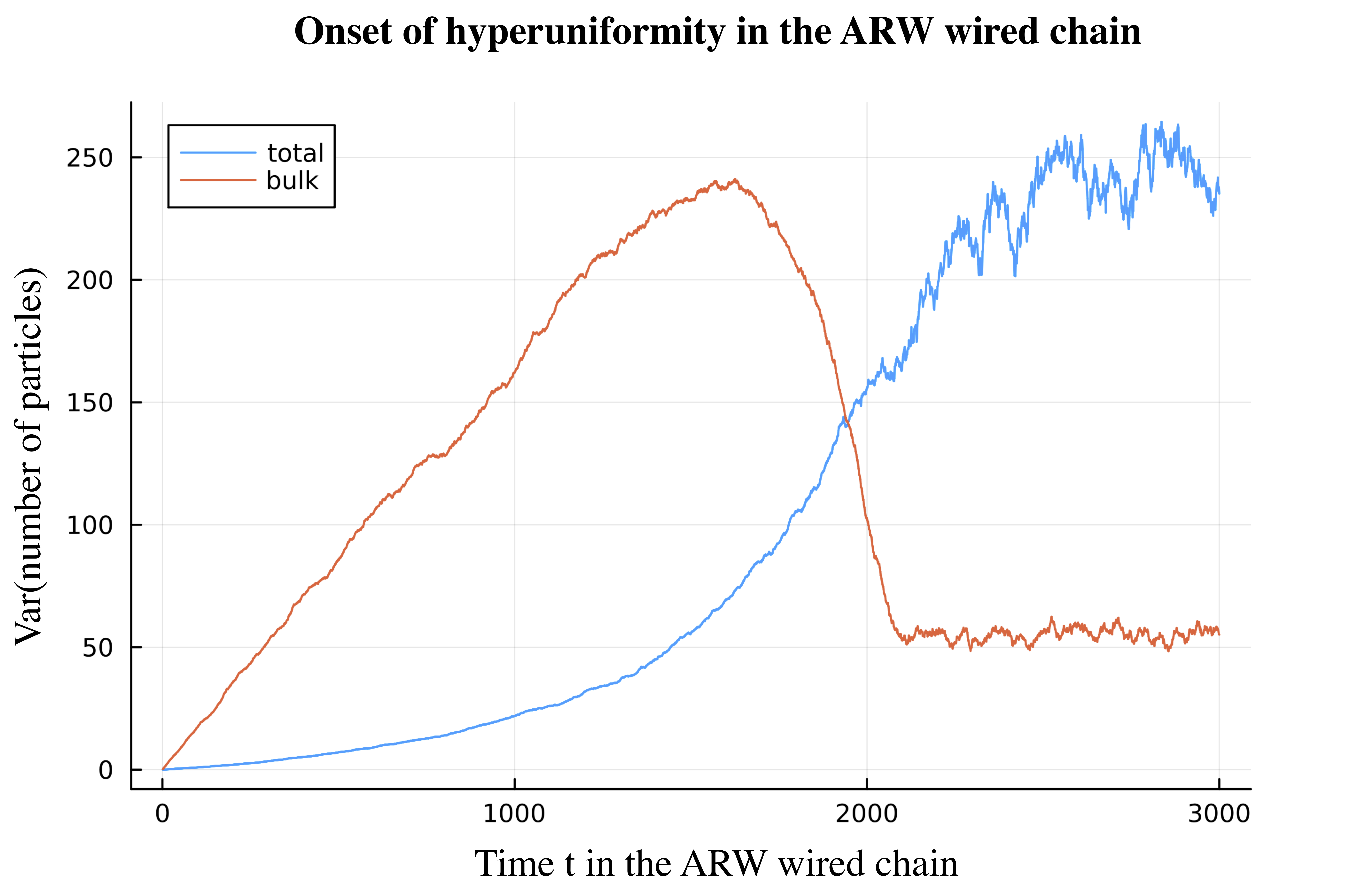}
\caption{The onset of hyperuniformity in the ARW wired chain on a square of side length $L=50$ at sleep rate $\lambda=2$. The variance $\EE |\eta_t|^2 - (\EE |\eta_t|)^2$ of the total number of particles in the system increases and then levels off after time $t = \zeta_s L^2$.  The variance of the number of particles in the bulk (central square of side length $L/2$) peaks and then drops steeply as $t/L^2$ approaches the stationary density $\zeta_s \approx 0.81$. 
}
\end{figure}

For comparison, observe that if the particles are placed in the box $[1,L]^d$ in an i.i.d.\ fashion, the variance is of order $L^d$. Thus hyperuniformity implies a kind of rigid repulsion among particles: in order to make the variance of the number of particles in the box $ [1,L]^d$ grow sublinearly with its volume $L^d$, the particle counts $\eta (v)$ for $v \in [1,L]^d$ must have significant negative correlations \cite{GL}. 

Burdzy has proved hyperuniformity in a related particle system called the Meteor Model \cite{Burdzy}. The challenge in adapting his proof to ARW lies in adapting the i.i.d.\ driving of the meteors to the correlated driving that results from active particles waking sleeping particles. \\

\subsection{The free chain}
For the free chain the number of particles increases by one at each time step, and we expect hyperuniformity to manifest starting at the threshold density $\zeta_c$. 
To state a hyperuniformity conjecture for the free chain, we will count particles in a box $B \subset \Z_L^d$.
Write $(\phi_k)_{k \geq 0}$ for the ARW free chain on the torus $\Z_L^d$, and write $|\phi_k|_B$ for the total number of particles in $B$ at time $k$.

\begin{conjecture}\label{c:hyperunif_free} \moniker{Onset of hyperuniformity in the free chain} \label{c.hyperfree}
There exists $\eps>0$ such that for any box $B = [0,\ell_1-1] \times \cdots \times [0,\ell_d-1] \subset \Z_L^d$ we have
	\[ \Var(|\phi_{\zeta L^d}|_B) = \begin{cases} \Theta(\ell_1\ldots\ell_d), & \zeta<\zeta_c \\
								O((\ell_1\cdots \ell_d)^{1-\eps}), & \zeta \geq \zeta_c. 
						\end{cases}
	\]
The implied constants depend only on $d,\lambda,\zeta$.
\end{conjecture}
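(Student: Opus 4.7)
The plan is to treat the two regimes separately after using the abelian property of ARW to realize $\phi_{\zeta L^d}$ as the stabilization $\Stab_{\Z_L^d}(\eta_\zeta)$, where $\eta_\zeta$ places $\lfloor \zeta L^d \rfloor$ active particles at i.i.d.\ uniform sites of the torus. This reduces both halves of the conjecture to statements about a single stabilized exchangeable configuration, and in particular makes the total particle count $|\phi_{\zeta L^d}|_{\Z_L^d}$ deterministic.

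For $\zeta < \zeta_c$ I would prove $\Var(|\phi_{\zeta L^d}|_B) = \Theta(\ell_1\cdots\ell_d)$ by establishing spatial decorrelation of the stationary state. First I would show that the restriction of $\phi_{\zeta L^d}$ to any window $W$ deep inside the torus converges to a subcritical measure $\mu_\zeta$ on $\{0,\sleep\}^{\Z^d}$ analogous to the $\mu$ of Conjecture \ref{c.subcritical}, via a coupling that uses universality of subcritical stabilizations. Strictly below threshold, avalanches should have finite mean range; a finite-range coupling between stabilizations of configurations that agree outside a large ball would then give summable (ideally exponentially decaying) covariances $\mathrm{Cov}(\one_{\{\phi(x)=\sleep\}},\one_{\{\phi(y)=\sleep\}})$. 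Summing yields the upper bound $O(\ell_1\cdots\ell_d)$; the matching lower bound follows from the fact that site marginals under $\mu_\zeta$ are bounded away from $0$ and $1$ (since $0<\zeta<\zeta_c$), combined with a coarse-graining on blocks much larger than the decorrelation length.

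For $\zeta \geq \zeta_c$ the variance reduction comes from mass conservation. Since $|\phi|_{\Z_L^d}$ is deterministic one has $\Var(|\phi|_B) = \Var(|\phi|_{B^c})$, and the task is to show that stabilization redistributes the $\Theta(\ell_1\cdots\ell_d)$ binomial fluctuation of $|\eta_\zeta \cap B|$ to a sublinear remainder. I would write
\[ |\phi|_B = |\eta_\zeta \cap B| + F(B), \]
where $F(B)$ is the net flux of sleepers into $B$ during stabilization, expressible as a boundary sum of discrete derivatives of the ARW odometer. Hyperuniformity then reduces to showing that $F(B)$ cancels $|\eta_\zeta \cap B| - \EE|\eta_\zeta \cap B|$ down to order $(\ell_1\cdots\ell_d)^{(1-\eps)/2}$. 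To estimate $F(B)$ I would compare the ARW odometer with the deterministic divisible-sandpile / obstacle-problem odometer associated to the constant source $\zeta \one_{\Z_L^d}$, in the spirit of the internal DLA / divisible-sandpile comparison of \cite{LBG92,LP10}.

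The hard part will be the supercritical upper bound: even granting the decomposition above, one must prove a quantitative regularity estimate for the ARW odometer at or above criticality, precisely the sort of sharp estimate that remains open. A realistic first target is $\zeta$ strictly greater than $\zeta_c$, where the exponentially long stabilization times of \cite{FG} together with the coupling philosophy behind Conjecture \ref{c.dense} should give enough control on macroscopic avalanches to carry out variance cancellation. The critical case $\zeta = \zeta_c$ appears to demand genuinely new ideas, since the optimal exponent $\eps$ is plausibly tied to unknown critical exponents of ARW itself; a successful attack there would likely shed light on several other conjectures in this paper simultaneously.
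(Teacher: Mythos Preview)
The statement you are attempting to prove is a \emph{Conjecture} in the paper, not a theorem: the paper offers no proof whatsoever, only numerical evidence (Figure~\ref{f.hyperunif_free}) and the surrounding discussion. There is therefore no paper proof to compare your proposal against.

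Your writeup is best described as a research roadmap rather than a proof, and to your credit you say so explicitly: you identify the supercritical odometer regularity estimate as ``the sort of sharp estimate that remains open'' and concede that the critical case ``appears to demand genuinely new ideas.'' Those admissions are accurate. In the subcritical regime, your plan rests on establishing summable covariances via a finite-range coupling of stabilizations; this is reasonable in spirit but is itself an open problem closely related to Conjecture~\ref{c.subcritical}, and no such coupling is currently known for ARW in $d\geq 2$. In the supercritical regime, the decomposition $|\phi|_B = |\eta_\zeta \cap B| + F(B)$ is correct and the reduction to boundary flux is natural, but the comparison of the ARW odometer to a divisible-sandpile obstacle problem has no rigorous counterpart in the literature: the methods of \cite{LBG92,LP10} rely on deterministic or nearly-deterministic toppling rules, whereas the ARW odometer is random with fluctuations whose scale is exactly what is at issue. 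So while the outline is sensible as a direction of attack, none of its load-bearing steps are currently available, which is precisely why the statement is stated as a conjecture.
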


\begin{figure}[h!]
\includegraphics[width=.85\textwidth]{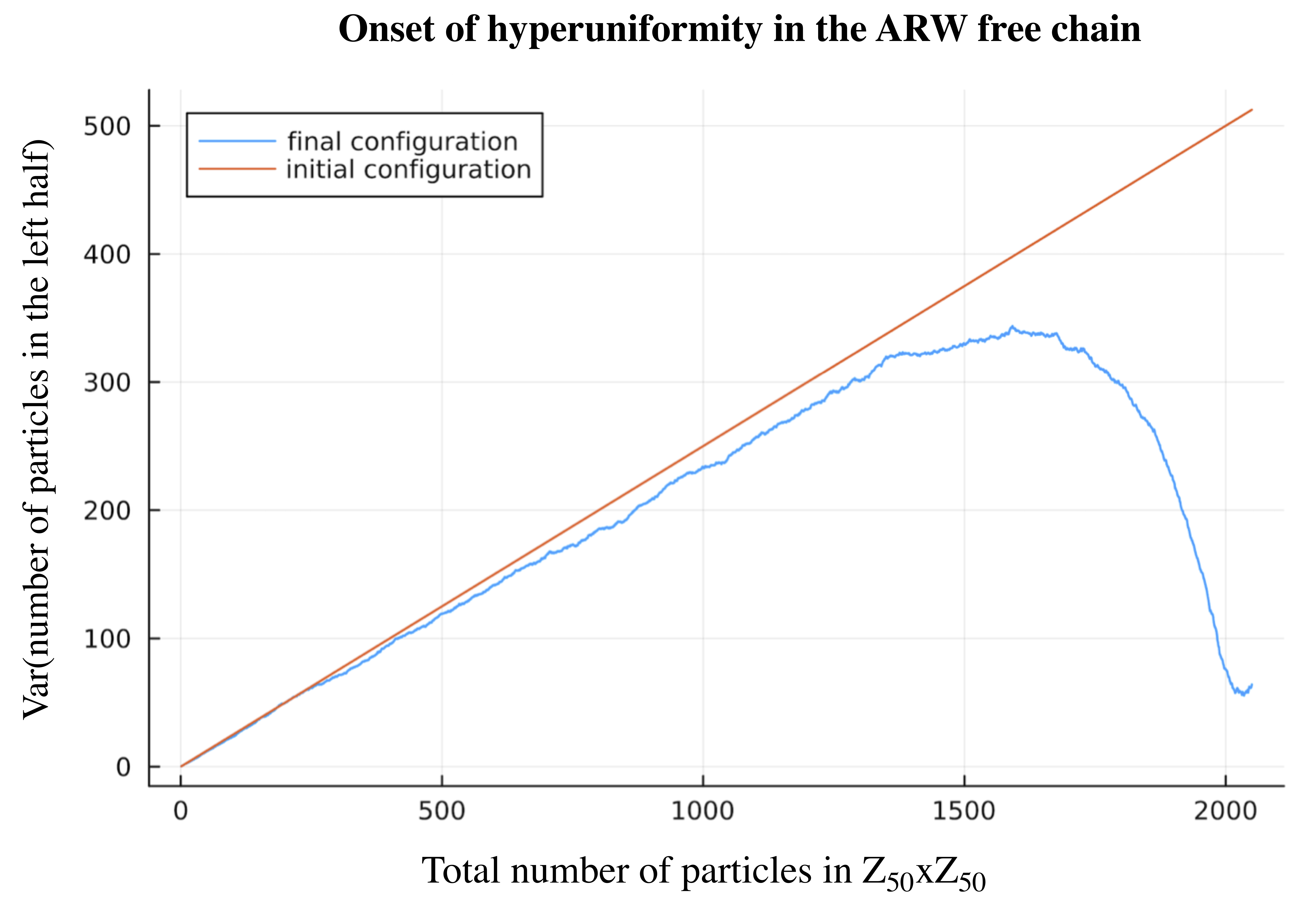}
\caption{
The onset of hyperuniformity in the ARW free chain on the discrete torus $\Z_{L} \times \Z_{L}$ with $L=50$ and sleep rate $\lambda=2$. The initial configuration of $k$ active particles at independent uniformly distributed sites, stabilizes to a final configuration of $k$ sleeping particles on the torus. The variance of the number of sleeping particles in the left half $\Z_{L/2} \times \Z_{L}$ initially increases with $k$, then peaks around $k =0.66 L^2$, and bottoms out around $\zeta_c L^2$ where $\zeta_c \approx 0.81$.
\label{f.hyperunif_free}
}
\end{figure}

\subsection{The wake chain}

\begin{figure}[ht]
\includegraphics[width=.8\textwidth]{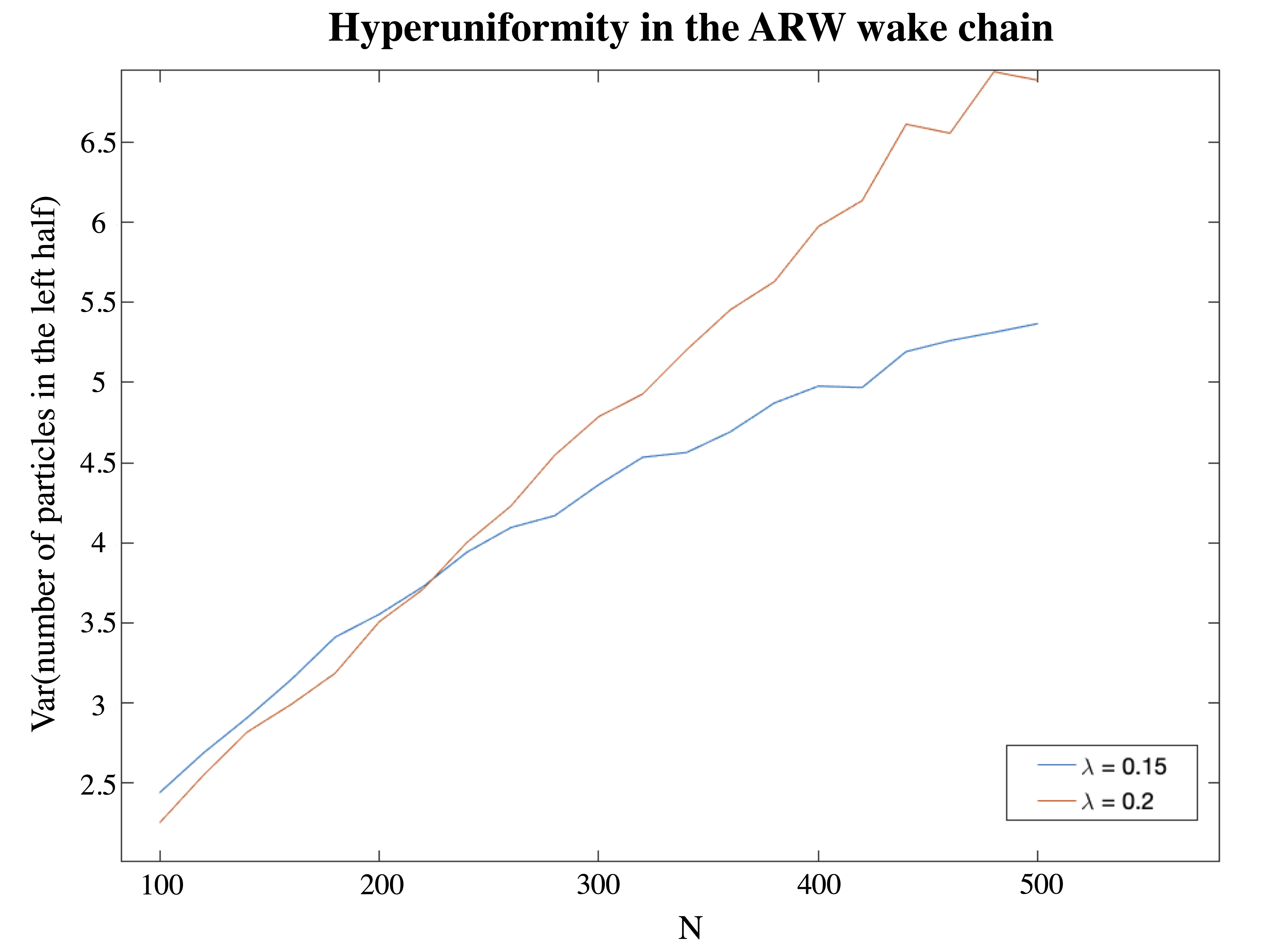}
\caption{The ARW wake chain on the cycle $\Z_N$ starting with $N/2$ particles at uniformly random sites and run for $100N$ time steps. The variance of the number of particles on the left half of the cycle grows linearly with $N$ at the subcritical sleep rate $\lambda = 0.2$, but sublinearly with $N$ at the critical sleep rate $\lambda = 0.15$.   (Here $\zeta=1/2$ is less than $\zeta_c(0.2)$ and approximately equal to $\zeta_c(0.15)$.)
\label{fig:wake_variance}
}
\end{figure} 

Write $\nu_{L,\zeta}$ for the stationary distribution of the ARW wake chain of $\floor{\zeta L^d}$ particles on the discrete torus $\Z_L^d$. For a stationary configuration $\eta \sim \nu_{L,\zeta}$, we can once again count the number of particles in a box $B = \prod_{i=1}^d [0,\ell_i-1]$ and ask whether the variance is linear or sublinear in the size of $B$.

\begin{question}  \label{q.hyperwake}
Is there $\eps>0$ such that the stationary state $\eta$ of the ARW wake chain satisfies
	\[ \Var_{\eta \sim \nu_{L,\zeta}} (|\eta|_B) = \begin{cases} \Theta(\ell_1\ldots\ell_d), & \zeta<\zeta_c \\
								O((\ell_1\cdots \ell_d)^{1-\eps}), & \zeta \geq \zeta_c ? 
						\end{cases}
	\]
\end{question}

Figure~\ref{fig:wake_variance} tests the case $d=1$ at density $\zeta=1/2$ and two different sleep rates: $\zeta_c(0.2) > 1/2$ and $\zeta_c(0.15) \approx 1/2$.

\section{Site correlations}
\label{s:correlations}

To address Question~\ref{q.measures} (Is $\pi = \mu = \alpha$?) we performed experiments comparing the site correlations in the ARW free chain, ARW wired chain, and ARW point source aggregate. For the free chain on the torus $\Z_L \times \Z_L$ we are able to average with respect to translation and reflection symmetries of the torus to increase the precision of the numerical estimates.  For the wired chain and point source, only reflection symmetries are available, so precision is lower. 

\subsection{Free site correlations} 
For small $x,y$ we computed the empirical correlation coefficient 
	\[ \frac{\EE ( 1_f (0,0)1_f(x,y) )- \zeta^2}{\zeta - \zeta^2} \] 
where $1_f (x,y)$ is the indicator of the event that site $(x,y)$ has a sleeping particle in the stabilization of $\zeta L^2$ particles started at independent uniform random sites on the torus $\Z_{L} \times \Z_{L}$ with side length $L=63$ at sleep rate $\lambda = 2$ and density $\zeta = 0.81 \approx \zeta_c$. We averaged over 80000 independent samples, and over translation and reflection symmetries of the torus. The results are shown in Table~\ref{table.free}.

\subsection{Wired site correlations}
For small $x,y$ we computed the empirical correlation coefficient
	\[ \frac{\EE ( 1_w (0,0) 1_w (x,y)) - \zeta^2}{\zeta - \zeta^2} \]
where $1_w (x,y)$ is the indicator of the event that site $(x,y)$ has a sleeping particle in the stationary state $\Stab(1_V)$ of the ARW wired chain on the square $V = [-31,31]^2 \subset \Z^2$ at sleep rate $\lambda=2$, and $\zeta = 0.81 \approx \zeta_s$.  We averaged over $5 \cdot 10^{7}$ independent samples, and over the $D_8$ symmetry of the square lattice.  The results are shown in Table~\ref{table.wired}.

\subsection{Point source site correlations}

For small $x,y$ we computed the empirical correlation coefficient
	\[ \frac{\EE (1_a (0,0) 1_a (x,y)) - \zeta^2}{\zeta - \zeta^2} \]
where $1_a (x,y)$ is the indicator of the event that site $(x,y)$ has a sleeping particle in the stabilization of $n=3215$ particles started at $(0,0)$, and $\zeta = 0.81 \approx \zeta_a$.  This value of $n$ was chosen to make the total number of particles match the free chain experiment described above. We averaged over $10^5$ independent samples, and over the $D_8$ symmetry of the square lattice. The results are shown in Table~\ref{table.pointsource}.

\begin{table}[ht]
\centering
\begin{tabular}{c|rrrrrr}
$y \backslash x$ & 0 & 1 & 2 & 3 & 4 & 5 \\
\hline
0 & 1.0000 & -0.0238 & -0.0101 & -0.0061 & -0.0045 & -0.0037 \\
1 & & -0.0139 & -0.0082 & -0.0056 & -0.0044 & -0.0037 \\
2 & & & -0.0063 & -0.0048 & -0.0040 & -0.0035 \\
3 & & & & -0.0042 & -0.0037 & -0.0034 \\
4 & & & & & -0.0034 & -0.0032 \\
5 & & & & & & -0.0031 \\
\end{tabular}
\caption{Short-range correlations for the ARW free Markov chain on the discrete torus $\Z_{63} \times \Z_{63}$. \label{table.free}
}
\end{table}

\begin{table}[h]
\centering
\begin{tabular}{c|rrrr}
$y \backslash x$ & 0 & 1 & 2 & 3 \\
\hline
0 & 1.000 & -0.021 & -0.008 & -0.003 \\
1 & & -0.012 & -0.006 & -0.003 \\
2 & & & -0.004 & -0.002 \\
3 & & & & -0.001 \\
\end{tabular}
\caption{Short-range correlations for the ARW wired Markov chain on a square of side length $63$ in $\Z^2$. 
\label{table.wired}
}
\end{table}

\begin{table}[h]
\centering
\begin{tabular}{c|rrrr}
$y \backslash x$ & 0 & 1 & 2 \\
0 & 1.000 &  -0.021 & -0.007 \\
 1 & & -0.010 & -0.006 \\
 2 & & & -0.000
 \end{tabular}
 \caption{Short-range correlations for the ARW point source aggregate of $3215$ particles in $\Z^2$.
 \label{table.pointsource}
 }
 \end{table}

Comparing Tables~\ref{table.free} and~\ref{table.wired}, the spatial decay of correlations is faster in the wired chain than in the free chain. Is this an artifact of the small system size, or are the limiting measures $\mu$ and $\pi$ different?  Comparing Tables~\ref{table.wired} and~\ref{table.pointsource}, the short range correlations are consistent with the measures $\pi$ and $\alpha$ being equal, but the low precision prevents us from conjecturing this confidently.

\subsection{Wake chain site correlations}

\begin{figure}[ht]
\includegraphics[width=.95\textwidth]{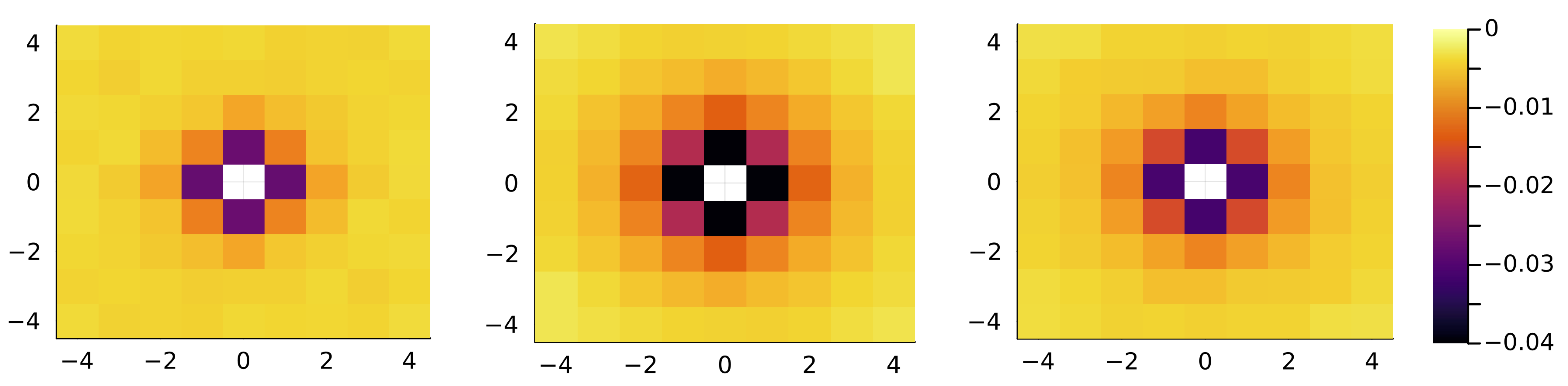}
\begin{tabular}{ccc}
$\qquad \zeta = 0.2 \hspace{1.4cm}$ & $\hspace{1cm}  \zeta = 0.6 \hspace{1.3cm}$ & $\hspace{1cm} \zeta = 0.7 \qquad \qquad $
\end{tabular}
\caption{Short-range correlations of the ARW Wake Markov Chain at three different densities $\zeta = 0.2,0.6,0.7$. Each non-central site is shaded according to the correlation coefficient between the events that a sleeping particle is located at that site and at the central site.
\label{fig.wake2}
}
\end{figure}

The ARW wake chain has a family of stationary distributions, one for each density. We wondered whether the site correlations depend on the density. The answer appears to be yes, as shown in Figure~\ref{fig.wake2}. For three different densities $\zeta \in \{0.2,0.6,0.7\}$ we computed the empirical correlation coefficient
	\[ \frac{\EE ( 1_z (0,0) 1_z (x,y)) - \zeta^2}{\zeta - \zeta^2} \]
where $1_z(x,y)$ is the indicator of the event that site $(x,y)$ has a sleeping particle in the stationary distribution of the ARW wake chain at density $\zeta$ on the torus $\Z_{15} \times \Z_{15}$ with sleep rate $1$. After a burn-in period to reach stationarity, we averaged over $100000$ subsequent time steps of the ARW wake chain and over translation symmetries of the torus. Short-range correlations are negative at all densities, and nothing notable seems to happen at the threshold density $\zeta_c \approx 0.68$.  The strongest nearest-neighbor site correlation occurs at a density somewhat less than $\zeta_c$.

\section{Contrasts with the Abelian sandpile model}
\label{s:contrasts}

We close by comparing Activated Random Walk to the Abelian Sandpile, and in particular highlighting which results, among the ones we believe to hold for ARW, are known to fail for the sandpile model. 

\subsection{Point Source}

Pegden and Smart \cite{PS13} proved existence of a limit shape for the point source Abelian Sandpile in $\Z^d$. The scaling limit of the Abelian sandpile on $\Z^2$ obeys a PDE that is not rotationally symmetric \cite{LPS1, LPS2}, so its limit shape from a point source is unlikely to be a Euclidean disk (although it has not been formally proved not to be a disk!). 
One symptom of the failure of universality in the Abelian Sandpile is the existence of ``dense clumps'' in the point-source sandpile. These are macroscopic regions whose density is higher than the average density of the whole pile. By contrast, we believe Activated Random Walkers are incompressible (Conjecture~\ref{c.compress}).

\subsection{Stationary Ergodic}

The Abelian Sandpile in $\Z^d$ for $d\geq 2$ has an interval of threshold densities: any density between $d$ and $2d-1$ can be threshold, depending on the law of the initial configuration \cite{FMR09,MQ,FLP,FR05}). By contrast, Activated Random Walk at a given sleep rate has a single threshold density (Theorem~\ref{t.RSZ}).

Conjecture~\ref{c.subcritical} fails for the Abelian Sandpile, due to slow mixing: the sandpile stabilization of $\eta_0 + \xi_t$ retains a memory of its initial state $\eta_0$ even as $t \uparrow \zth - \zeta_0$ \cite{threshold}. Terms like ``the self-organized critical state'' result in lot of confusion in the physics literature on the Abelian Sandpile because there are many such states!  One of them, the limit of the uniform recurrent state, is amenable to exact calculations 
\textcolor{blue}{\cite{CS12, KW15, looping, KW16}}, but slow driving from a subcritical state will usually produce a critical state with different properties (e.g.\ different density). 

\subsection{Wired Markov Chain}

Fast mixing of ARW stands in contrast to the slow mixing of the Abelian sandpile, where $t_{mix} = \Theta(L^2 \log L)$ for the wired chain on $(\Z/L\Z)^2$ \cite{HJL} and on $[1,L]^2$ \cite{Hough}.  
This logarithmic factor is responsible for the discrepancy between the stationary density $\zeta_s = 2.125000$ and the threshold density $\zeta_c = 2.125288$ observed in \cite{FLW10a}.

To approach Conjectures \ref{c.stationary}--\ref{c.infinitevolume} and Question \ref{q.measures} it may be useful to find a combinatorial description of the ARW stationary distribution.
An important tool available for the Abelian sandpile, which has no counterpart yet in the ARW setting, is a bijection between recurrent states and spanning trees. This bijection is useful because of the well-developed theory of infinite-volume limits like \eqref{eq:infinitevolume} for trees \cite{BLPS}. The bijection from sandpiles to trees plays a starring role in Athreya and J\'{a}rai's proof that the uniform recurrent sandpile on a finite set $V \subset \Z^d$ has an infinite-volume limit \cite{AJ}, and in J\'{a}rai and Redig's study of infinite-volume sandpile dynamics \cite{JR}. Hutchcroft used the bijection with spanning trees to prove universality results for high-dimensional sandpiles \cite{Hutchcroft}. 

\subsection{Free Markov Chain}

The Hockey Stick Conjecture~\ref{c:hockey} is believed to be false for the Abelian Sandpile due to its slow mixing. There is, however, a weaker conjectured relationship between the sandpile free and wired chains: the threshold time of the free chain coincides with the first time when a macroscopic number of particles exit the wired chain \cite{FLW10b}.

\section*{Acknowledgments}

We thank Ahmed Bou-Rabee, Hannah Cairns, Deepak Dhar, Shirshendu Ganguly, Chris Hoffman, Feng Liang, SS Manna, Pradeep Mohanty, Leonardo Rolla, Vladas Sidoravicius, and Lorenzo Taggi for many inspiring conversations. Thanks to Chris Hoffman for pointing out that Conjecture~\ref{c.stationary} requires a condition on the boundary of $V_n$, and that Conjecture~\ref{c:hockey} requires a condition on the driving.
This project was partly supported by the Funds for joint research Cornell-Sapienza.  LL was partly supported by the NSF grant DMS-1105960 and IAS Von-Neumann Fellowship. We thank Cornell University, Sapienza University, IAS and ICTS-TIFR for their hospitality.

\end{document}